\definecolor{gray}{rgb}{0.25, 0.25, 0.25}
\newtheorem{theorem}{Theorem}[section]
\newtheorem{lemma}[theorem]{Lemma}
\newtheorem*{observation*}{Observation}
\newtheorem*{question*}{Question}
\newenvironment{definition*}
  {
   \innerdefinition}
  {\endinnerdefinition}
\theoremstyle{definition}
\theoremstyle{remark}
\newcommand{\ex}{\textrm{ex}}
\newcommand{\EX}{\textrm{EX}}
\newcommand{\spex}{\textrm{spex}}
\title{Spectral extremal problem of the $p$th power of cycles}
\author{{Xinhui Duan,  Lu Lu\footnote{Corresponding author.}\setcounter{footnote}{-1}\footnote{E-mail address:duanxinhui01@163.com (X. Duan); lulugdmath@163.com (L. Lu).}}\\[2mm]
\small School of Mathematics and Statistics, Central South University,\\
\small Changsha, Hunan, 410083, China\\
}
\date{}
\begin{document}
\maketitle

\begin{abstract}
For a cycle $C_k$ on $k$ vertices, its $p$-th power, denoted $C_k^p$, is the graph obtained by adding edges between all pairs of vertices at distance at most $p$ in $C_k$. Let $\ex(n, F)$ and $\spex(n, F)$ denote the maximum possible number of edges and the maximum possible spectral radius, respectively, among all $n$-vertex $F$-free graphs. In this paper, we determine precisely the unique extremal graph achieving $\ex(n, C_k^p)$ and $\spex(n, C_k^p)$ for sufficiently large $n$.\\

\noindent {\it AMS Classification}: 05C50\\[1mm]
\noindent {\it Keywords}: Spectral radius, Spectral extremal graph, Tur\'an problem.
\end{abstract}

\section{Introduction}
In this paper, all graphs considered are undirected, finite and simple. Let $G$ be a graph with vertex set $V(G)$ and edge set $E(G)$. Let $K_n$ be the {\it complete graph} on $n$ vertices, and $K_{s,t}$ be the {\it complete bipartite graph} with parts of sizes $s$ and $t$. We write $C_n$ and $P_n$ for the {\it cycle} and {\it path} on $n$ vertices, respectively. The {\it complement} $\overline{G}$ of $G$ is the graph with vertex set $V(G)$ and edge set $\{uv\mid uv\notin E(G)\}$. For two vertex-disjoint graphs $G$ and $H$, the {\it union} of $G$ and $H$ is the graph $G\cup H$ with vertex set $V(G)\cup V(H)$ and edge set $E(G)\cup E(H)$. In particular, we write $kG$ to denote the vertex-disjoint union of $k$ copies of $G$. The {\it join} of $G$ and $H$, denoted by $G \vee H$, is the graph obtained from $G \cup H$ by adding edges between every vertex of $G$ and every vertex of $H$. For positive integers $n_1,\cdots, n_r$, the complete $r$-partite graph $K_{n_1,\cdots,n_r}$ is defined as $\overline{K_{n_1}\cup \cdots \cup K_{n_r}}$. The $r$-partite Tur\'{a}n graph on $n$ vertices, $T_{n,r}$, is the completer-partite graph $K_{n_1,\cdots,n_r}$ with $\max\{|n_i-n_j| \mid 1\leq i,j\leq r\}\leq 1$. Denote by $\nu(G)$ the matching number, $\chi(G)$ the chromatic number and $\delta(G)$ the minimum degree. Let $G^k$ be the {\it $k$-power} of a graph $G$, which is obtained by joining all pairs of vertices with distance at most $k$ in $G$. We write $C_{k}^p$ and $P_{k}^p$ for the $p$-power of a cycle $C_k$ and a path $P_k$, respectively.

A graph $G$ is $F$-free if it contains no subgraph isomorphic to $F$. For a family of graphs $\mathcal{F}$, $G$ is $\mathcal{F}$-free if it is $F$-free for every $F \in \mathcal{F}$. The {\it Tur\'an number} $\ex(n,F)$ is defined as
 \[\textrm{ex}(n, F)=\max\left\{e(G) \mid G \in \mathcal{G}_n \text{ and } G \text{ is } F\text{-free}\right\},\] with $\EX(n, F)$ denoting the set of extremal graphs: 
\[\textrm{EX}(n, F) = \left\{G \in \mathcal{G}_n \mid G \text{ is }F\text{-free and }e(G) = \textrm{ex}(n, F)\right\}.\] 

The study of Tur\'an numbers for $P_k^p$ and $C_k^p$ has a long history in extremal graph theory. Xiao et al. \cite{XKXZ} determined $\ex(n, P_5^2)$ and $\ex(n, P_6^2)$ for all $n$ and conjectured a general formula for $\ex(n, P_k^2)$. This conjecture was later resolved by Yuan \cite{YLT1}, who fully characterized $\EX(n, P_k^p)$. For cycles, Ore \cite{Ore} first determined $\ex(n, C_n)$ in 1961, while Khan and Yuan \cite{KY} established $\ex(n, C_n^2)$ in 2022. The classical Mantel Theorem gives $\ex(n, C_3)$, and F\"uredi and Gunderson \cite{FG} described $\ex(n, C_{2k+1})$ for $k \geq 2$. The most recent results on $\ex(n, C_{2k+2})$ can be found in \cite{H2}. When considering squared cycles ($p=2$), the Tur\'an number $\ex(n, C_k^2)$ is immediate from Tur\'an's Theorem \cite{Turan1941} for $3 \leq k \leq 5$, since $C_k^2 \cong K_k$ in these cases. For $k \geq 6$, Fang and Zhao \cite{FZ} completely determined $\EX(n, C_k^2)$ for sufficiently large $n$ when $3 \nmid k$. Motivated by these works, we aim to fully characterize $\EX(n, C_k^p)$, generalizing Fang and Zhao’s result. Since $C_k^p \cong K_k$ when $p \geq \lfloor k/2 \rfloor$, it suffices to consider the case $p < \lfloor k/2 \rfloor$. Our main result is as follows.

\begin{theorem}\label{thm-1}
For two integers $k$ and $p$ with $k\ge 2p+1$, assume $k=s(p+1)+r$ for some $s\geq1$ and $0\le r<p+1$. Let $0\le m$ and $1\le t\le s$ be the unique integer satisfying $r=ms+t$. If $r\ne 0$, then $\EX(n,C_k^p)=\{ K_{t-1}\vee T_{n-t+1, p+m+1}\}$ for large enough $n$.
\end{theorem}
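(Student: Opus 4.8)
The plan is to first extract the graph parameters of $C_k^p$ that control the extremal behaviour, then build the construction, and finally run a stability-plus-cleaning argument. I would begin by computing that a maximum independent set of $C_k^p$ is obtained by taking every $(p+1)$-st vertex around the cycle, so $\alpha(C_k^p)=\lfloor k/(p+1)\rfloor=s$, and hence $\chi(C_k^p)=\lceil k/s\rceil=(p+1)+\lceil r/s\rceil=p+m+2$, the last equality using $r=ms+t$ with $1\le t\le s$. The decisive parameter is the \emph{deletion number}: the minimum number of vertices whose removal makes $C_k^p$ turn $(p+m+1)$-colourable is exactly $t$. The lower bound is a counting argument, since any $(p+m+1)$-colouring of $C_k^p-S$ splits its $k-|S|$ vertices into $p+m+1$ independent sets of size at most $s$, forcing $k-|S|\le (p+m+1)s$, i.e.\ $|S|\ge k-(p+m+1)s=t$; for the upper bound I would exhibit an explicit \emph{independent} set of $t$ vertices whose deletion leaves a graph that partitions into $p+m+1$ maximum independent sets. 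Equivalently, $C_k^p$ admits a proper colouring into one class $C_0$ of size $t$ and $p+m+1$ classes of size $s$.

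This immediately yields the construction. I claim $H:=K_{t-1}\vee T_{n-t+1,p+m+1}$ is $C_k^p$-free: in any embedding $\phi\colon C_k^p\hookrightarrow H$, the set $S=\phi^{-1}(\text{apex vertices})$ has size at most $t-1$, and since the apex vertices are universal while the rest of the image lies in the complete $(p+m+1)$-partite graph $T_{n-t+1,p+m+1}$, the parts pull back to a proper $(p+m+1)$-colouring of $C_k^p-S$, contradicting the deletion number $t$. Hence $\ex(n,C_k^p)\ge e(H)$, and the whole theorem reduces to the matching upper bound and uniqueness. For that, I would invoke Erd\H{o}s--Stone--Simonovits, which together with $\chi(C_k^p)=p+m+2$ gives $\ex(n,C_k^p)=\bigl(1-\tfrac{1}{p+m+1}+o(1)\bigr)\binom{n}{2}$, so the main term already matches $e(T_{n,p+m+1})$. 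The associated stability theorem then shows any $C_k^p$-free $G$ with $e(G)\ge e(H)-o(n^2)$ has a partition $V(G)=V_1\cup\cdots\cup V_{p+m+1}$ into nearly balanced parts with only $o(n^2)$ edges inside parts. I would clean this in the usual way: pass to large minimum degree via progressive induction, then refine the partition so that, outside a negligible set, every vertex is essentially complete to all other parts and has essentially no neighbour in its own part.

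The decisive and hardest step is an \emph{embedding lemma} converting any deviation from $H$ into a copy of $C_k^p$. Calling a vertex an \emph{apex} if its degree is $n-O(1)$, I would show that (i) the presence of $t$ apex vertices already forces $C_k^p\subseteq G$, and (ii) once only $t-1$ apices are present, a single edge inside some part $V_i$ (or a single exceptional vertex) again forces $C_k^p\subseteq G$. Both rest on the size-$(t,s,\dots,s)$ colouring above: the $t-1$ apices host $C_0$ minus one vertex $z$, the classes $C_1,\dots,C_{p+m+1}$ are embedded into small complete multipartite pieces of the parts, and $z$ is placed either on a $t$-th apex (case (i)) or on the internal edge (case (ii)). The point that makes (ii) work is clean: $z$ has only $2p$ neighbours in $C_k^p$, while there are $p+m+1\ge p+1$ classes among $C_1,\dots,C_{p+m+1}$, so by pigeonhole (since $2p<2(p+m+1)$) some class meets $N(z)$ at most once; re-labelling that class as the part containing the internal edge $xy$, one maps $z\mapsto x$ and its unique neighbour there to $y$, and everything else embeds into the abundant parts. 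The genuine obstacle is making this embedding robust against the $o(n^2)$ errors surviving from stability, because $C_k^p$ is a rigid cyclic graph and all distance-$\le p$ adjacencies must be realised simultaneously, so the cyclic block-and-gap pattern of the colouring must be threaded through the parts without the freedom to correct greedily.

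Granting the embedding lemma, the structure of an extremal $G$ is forced: at most $t-1$ apex vertices, no edge inside any part, and every non-apex vertex complete to all parts but its own; thus $G$ is a subgraph of $K_{t-1}$ joined to a complete $(p+m+1)$-partite graph. Among all graphs of this shape on $n$ vertices the edge count is uniquely maximised by taking the apices to be a clique $K_{t-1}$ joined to everything and balancing the $p+m+1$ parts, that is, by $H=K_{t-1}\vee T_{n-t+1,p+m+1}$. Comparing $e(G)\le e(H)$ with the lower bound from the construction then gives $\ex(n,C_k^p)=e(H)$ and $\EX(n,C_k^p)=\{H\}$, completing the proof.
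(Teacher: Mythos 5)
Your proposal is correct in substance but takes a much longer route than the paper. The structural facts you establish about $C_k^p$ --- that $\chi(C_k^p)=p+m+2$, that deleting any $t-1$ vertices cannot lower the chromatic number, and that there is an independent $t$-set (equivalently, a matching of $t$ edges) whose removal does lower it --- are exactly the statement that $C_k^p$ is color-$t$-critical in Simonovits' sense, and this is precisely the content of the paper's Lemma \ref{lem-2}; your counting bound $k-|S|\le (p+m+1)s$ and your colouring with class sizes $(t,s,\dots,s)$ coincide with the paper's argument. The difference is what happens next: the paper simply invokes Simonovits' 1974 theorem (Lemma \ref{lem-1}), which says that for any color-$t$-critical graph $H$ with $\chi(H)=r+1$ one has $\EX(n,H)=\{K_{t-1}\vee T_{n-t+1,r}\}$ for large $n$, and Theorem \ref{thm-1} follows in one line. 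You instead propose to re-derive this via Erd\H{o}s--Stone--Simonovits, stability, and a bespoke embedding lemma, which amounts to re-proving Simonovits' theorem specialised to $C_k^p$. Your embedding sketch is sound --- placing the size-$t$ colour class on the apices, and using pigeonhole on the $2p$ neighbours of $z$ over $p+m+1$ classes to find a class meeting $N(z)$ exactly once (it cannot meet it zero times, as that would give an independent set of size $s+1$) so that a single internal edge absorbs $z$ --- and it mirrors the machinery the paper actually deploys in Section 3 for the spectral version. But you leave the robustness of the embedding against the $o(n^2)$ stability errors and the final exact cleaning as acknowledged gaps, and these constitute the bulk of the technical work. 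The efficient move is to recognise your deletion-number computation as a verification of color-$t$-criticality and then cite the known exact extremal theorem for color-critical graphs rather than rebuild it.
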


Given a graph $G$, let $A(G)$ be its adjacency matrix, and $\lambda(G)$ its spectral radius. The {\it spectral Tur\'an number} for a  graph $F$ is defined as
 \[\textrm{spex}(n, F)=\max\left\{\lambda(G) \mid G \in \mathcal{G}_n \text{ and } G \text{ is } F\text{-free}\right\},\]
 with $ \textrm{SPEX}(n, F)$ denoting the set of extremal graphs: 
 \[\textrm{SPEX}(n, F) = \left\{G \in \mathcal{G}_n \mid  G \text{ is } F\text{-free and }\lambda(G) = \textrm{spex}(n, F)\right\}.\] 
 The problem of spectral Tur\'an number has attracted significant attention over the past decades, with results established for numerous specific graphs $F$; comprehensive surveys are found in \cite{CZ,LLF,N1}. Specially, Nikiforov \cite{Nikifrov2008} determined $\spex(n, C_{2k+1})$ for sufficiently large $n$. Nikiforov \cite{Nikiforov2007} and Zhai and Wang \cite{ZW} determined $\textrm{SPEX}(n, C_4)$ for odd $n$ and even $n$, respectively. Subsequently, Zhai and Lin \cite{ZW} determined $\textrm{SPEX}(n, C_6)$. Very recently, Cioab\u{a}, Desai and Tait \cite{CDT} obtained $\textrm{SPEX}(n, C_{2k})$ for $k \geq 3$ and $n$ large enough. Yan, He, Feng and Liu \cite{YHFL} obtained $\textrm{SPEX}(n, C^2_n)$ for $n\geq 15$, and they expected that the extremal graph with respect to $\spex(n, C^k_n)$ for $n$ large enough may be the graph $K_n\setminus E(S_{n-2k+1})$, where $S_{n-2k+1}$ is the star of order $n-2k+1$. For any integer $k\geq 6$ with $3\nmid k$, Fang and Zhao \cite{FZ} also characterized $\textrm{SPEX}(n,C_{k}^2)$ for sufficient $n$. So, we would like to characterize $\textrm{SPEX}(n,C_k^p)$. 
 
\begin{theorem}\label{thm-2}
For two integers $k$ and $p$ with $k\ge 2p+1$, assume $k=s(p+1)+r$ for some $s\geq1$ and $0\le r<p+1$. Let $0\le m$ and $1\le t\le s$ be the unique integer satisfying $r=ms+t$. If $r\ne 0$ and $t\ne s$, then $\textrm{SPEX}(n,C_k^p)=\{ K_{t-1}\vee T_{n-t+1, p+m+1}\}$ for large enough $n$.
\end{theorem}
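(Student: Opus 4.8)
The plan is to follow the two-stage strategy now standard for spectral Tur\'an problems: a spectral-stability step that locates the extremal graph coarsely, followed by a refinement step that uses the Perron eigenvector to pin it down exactly. Write $\ell=p+m+1$ and $H^\ast=K_{t-1}\vee T_{n-t+1,\ell}$, so that Theorem~\ref{thm-1} gives $\EX(n,C_k^p)=\{H^\ast\}$. Two facts organise everything. First, since $\floor{k/(p+1)}=s$ and $r=ms+t$ with $1\le t\le s$, the power $C_k^p$ has independence number $s$ and chromatic number $\chi(C_k^p)=\ceil{k/\floor{k/(p+1)}}=\ceil{k/s}=\ell+1$; hence $\ell$ is exactly the number of large parts of $H^\ast$, the graph $T_{n,\ell}$ is $C_k^p$-free, and balanced complete $(\ell+1)$-partite graphs are not. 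Second, a copy of $C_k^p$ embeds into a complete multipartite graph precisely when its parts admit a cyclic sequence of length $k$ in which every $p+1$ consecutive entries (read cyclically) lie in distinct parts and each part is used at most as often as its size; this capacity criterion is what separates the $C_k^p$-free multipartite graphs from the rest and is the combinatorial engine behind Theorem~\ref{thm-1}.

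Let $G\in\SPEX(n,C_k^p)$ with Perron eigenvector $x$ normalised by $\max_v x_v=x_z=1$, and set $\lambda=\lambda(G)$. Since $H^\ast$ is admissible, $\lambda\ge\lambda(H^\ast)=\tfrac{\ell-1}{\ell}n+\Theta(1)$, where the $\Theta(1)$ term, of size about $\tfrac{(t-1)\ell}{\ell-1}$, is contributed by the dominating clique $K_{t-1}$. The first step is to convert this into approximate structure: by a spectral version of the Erd\H{o}s--Stone--Simonovits stability theorem (following the approach of \cite{FZ}), any $C_k^p$-free graph whose spectral radius is this close to $\lambda(T_{n,\ell})$ differs from $T_{n,\ell}$ in $o(n^2)$ edges, so one obtains a partition $V(G)=V_1\cup\cdots\cup V_\ell$ into nearly equal classes across which all but $o(n^2)$ of the edges run. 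The eigenvalue equation $\lambda x_v=\sum_{u\sim v}x_u$ together with the lower bound on $\lambda$ then forces $x_v$ to be bounded below by a positive constant for all but $o(n)$ vertices, and forces each such vertex to have only $o(n)$ defects (non-neighbours in other classes or neighbours in its own class).

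The heart of the proof is to make this structure exact via the familiar local-switching argument. If a vertex $v$ carried even one defect, then, because $x$ is the Perron vector and $\lambda$ is large, deleting $v$'s in-class edges and joining $v$ completely to the other classes would strictly increase $\sum_{u\sim v}x_u$, hence the Rayleigh quotient of $x$ and so $\lambda$, provided the resulting graph stays $C_k^p$-free; checking that this rewiring preserves $C_k^p$-freeness is exactly where the capacity criterion is invoked. Iterating cleans $G$ into a complete multipartite graph, except for a bounded set $D$ of vertices joined to almost everything, which the switching turns into a clique complete to the rest. Applying the capacity criterion to the profile consisting of the large classes $|V_1|,\dots,|V_\ell|$ together with capacity $1$ for each vertex of $D$ shows that $G$ can stay $C_k^p$-free only if there are at most $\ell$ large classes and $|D|$ is suitably bounded; optimising the Perron value over the resulting feasible profiles then selects $\ell$ large classes of as-equal-as-possible size and $|D|=t-1$, that is, $H^\ast$, whenever $t\ne s$.

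The main obstacle is precisely this final optimisation, and it is where $t\ne s$ is used. Unlike the edge count, the spectral radius strictly rewards enlarging the dominating clique at the expense of balance, so the feasible profiles must be compared at the $\Theta(1)$ level rather than by counting edges as in Theorem~\ref{thm-1}: the competing profile trades the balance of one large class for an extra dominating vertex, a move that always loses on edges but need not lose on $\lambda$. At the divisibility boundary $t=s$ (equivalently $k=s\,\chi(C_k^p)$, i.e.\ $\chi(C_k^p)\mid k$) the capacity criterion leaves just enough slack for such a differently balanced complete-multipartite-plus-clique graph to remain $C_k^p$-free while matching $\lambda(H^\ast)$ to leading and first sub-leading order, so that the optimality or uniqueness of $H^\ast$ fails; excluding $t=s$ removes this competitor. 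Quantifying the sub-leading spectral gap between $H^\ast$ and its nearest competitor sharply enough that it dominates the $O(1/n)$ error terms arising from the imbalance of the Tur\'an classes when $\ell\nmid(n-t+1)$ is the delicate technical point of the whole argument.
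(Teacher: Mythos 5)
Your skeleton --- lower bound from $H^\ast=K_{t-1}\vee T_{n-t+1,\ell}$, spectral stability giving an approximate $\ell$-partition, eigenvector estimates, local switching, then a final balancing step --- is the same route the paper takes, so the strategy is sound. But as written this is a plan rather than a proof, and the two places where you locate the real difficulty are not where it actually lies. Everything in the switching stage hinges on a concrete embedding lemma that you only gesture at via the ``capacity criterion'': one must show that if the graph had $t$ independent in-class edges, or $t$ or more exceptional (high in-class degree) vertices, then $C_k^p$ embeds. The paper does this by an explicit construction (Lemma \ref{lem-3-10}, reused with modified degree estimates in Lemmas \ref{lem-3-11}, \ref{lem-3-13} and \ref{lem-a-1}): it writes $k=t(p'+1)+(s-t)p'$ with $p'=p+m+1$, places the $t$ matching edges at prescribed cyclic positions, and greedily completes the cycle through common neighbourhoods. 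Without this lemma you cannot conclude that the exceptional set $D$ has size at most $t-1$, and you also cannot verify that your rewirings preserve $C_k^p$-freeness, since that verification consists precisely of exhibiting the rewired graph as a subgraph of $K_{t-1}\vee(\text{complete }\ell\text{-partite})$ --- which presupposes the bound $|D|\le t-1$. Your proposal instead extracts that bound from ``optimising the Perron value over feasible profiles,'' which is circular at this point.

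Your final paragraph also misdiagnoses the endgame. There is no competitor that ``trades the balance of one large class for an extra dominating vertex'': $K_{t}\vee(\text{complete }\ell\text{-partite with parts of size at least }s)$ already contains $C_k^p$ (take $t$ blocks of the form $(\text{singleton},1,2,\dots,p')$ followed by $s-t$ blocks $(1,2,\dots,p')$; every window of $p+1$ consecutive entries is rainbow since $p'\ge p+1$), so no sub-leading spectral comparison between $H^\ast$ and a larger-clique competitor ever has to be made. The paper pins $|W|=t-1$ by the embedding argument on one side and a single switching on the other (Lemmas \ref{lem-3-11} and \ref{lem-3-13}); the only $\Theta(1)$-level comparison left is the standard balancing of the $\ell$ parts via the explicit eigenvector of $K_{t-1}\vee K_{n_1,\dots,n_{p'}}$. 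Relatedly, your explanation of the hypothesis $t\ne s$ --- that at $t=s$ a genuine competitor matches $\lambda(H^\ast)$ and uniqueness fails --- is almost certainly wrong: for $p=2$, $k=8$ one has $t=s=2$, yet Fang and Zhao's theorem (which this result generalises) still identifies the unique spectral extremal graph $K_1\vee T_{n-1,3}$. The restriction $t\ne s$ is a technical limitation of the argument, not the shadow of a counterexample, and a correct proof should not be organised around a phantom competitor at $t=s$.
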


\section{Proof of Theorem \ref{thm-1}}
For any positive integer $k$, a graph $G$ is called {\it color-$k$-critical} if there exist $k$ suitable edges whose removal decreases its chromatic number, and deleting any $k-1$ vertices does not decrease its chromatic number. It is clear from the definition that any $k$ edges whose removal decreases $\chi(G)$ must form a matching of size $k$. In particular, color-1-critical graphs are just color-critical graphs. In \cite{Sim1974}, Simonovits determined the unique extremal graph for every color-$k$-critical graph.

\begin{lemma}[\cite{Sim1974}]\label{lem-1}
     Let $k\ge 1$ and let $H$ be a a color-$k$-critical graph with chromatic number $ \chi(H) = r + 1 $. Then $\EX(n,H)=\{K_{k-1} \vee T_{n-k+1,r}\}$ for sufficiently large $n$. 
\end{lemma}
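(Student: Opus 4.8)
The plan is to derive the lemma by the stability method. Write $G_0:=K_{k-1}\vee T_{n-k+1,r}$, with $A$ its $(k-1)$-element apex set and $B=V(G_0)\setminus A$ the balanced $r$-partite remainder. First I would check the lower bound by proving $G_0$ is $H$-free: if $H$ embedded into $G_0$, let $S$ be the set of vertices of $H$ landing in $A$, so $|S|\le k-1$; then $H-S$ would embed into $G_0[B]=T_{n-k+1,r}$, which is $r$-colourable, giving $\chi(H-S)\le r$. Since color-$k$-criticality guarantees that deleting at most $k-1$ vertices never lowers the chromatic number, $\chi(H-S)=\chi(H)=r+1$, a contradiction. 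Hence $\ex(n,H)\ge e(G_0)$.

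For the upper bound and uniqueness I would run the two standard steps. By the Erd\H{o}s--Stone--Simonovits theorem $\ex(n,H)=\bigl(1-\tfrac1r+o(1)\bigr)\binom n2$, and by the Erd\H{o}s--Simonovits stability theorem any extremal $H$-free $G$ differs from $T_{n,r}$ in $o(n^2)$ edges. Fixing a partition $V(G)=V_1\cup\cdots\cup V_r$ that realises this, the number of edges inside the parts and of non-edges across parts are both $o(n^2)$ and each $|V_i|=n/r+o(n)$. A deletion argument with induction on $n$ then upgrades this to a minimum-degree bound: a vertex of degree below $(1-\tfrac1r)n-cn$ could be deleted and the result compared with $\ex(n-1,H)$, using $\ex(n,H)-\ex(n-1,H)=(1-\tfrac1r)n+O(1)$, to contradict extremality. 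Thus every vertex keeps at least $(1-\tfrac1r)n-o(n)$ neighbours, almost all outside its own part.

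The heart of the argument is to bound the set $A$ of near-universal vertices (those adjacent to all but $o(n)$ vertices, equivalently those carrying edges into their own part) by $k-1$, and then to pin down the exact structure. Fix a matching $M=\{x_1y_1,\dots,x_ky_k\}$ of $H$ whose deletion makes $H$ $r$-colourable and fix such a colouring $c$. If $A$ contained $k$ vertices $w_1,\dots,w_k$---after cleaning, pairwise adjacent with a common neighbourhood meeting every part in a positive fraction---then mapping $x_i\mapsto w_i$ and embedding the rest of $H$ greedily into $V_1,\dots,V_r$ along $c$ (the bipartite graphs between parts being near-complete) yields a copy of $H$; so $|A|\le k-1$. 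If moreover $|A|=k-1$ while $G-A$ still had an edge $uv$ inside a part, I would use $w_1,\dots,w_{k-1}$ for $x_1,\dots,x_{k-1}$ and the edge $uv$ for the last matching edge $x_ky_k$ (the endpoints $x_k,y_k$ share a colour, so go to $u,v$); this embeds $H$ because in $c$ the vertex $x_k$ has no same-colour neighbour, so all its other neighbours land in parts reachable from $u$ by its large external degree. Hence $G-A$ is $r$-partite, and comparing $e(G)\ge e(G_0)$ against the maximum edge count of ``$\le k-1$ universal vertices over an $r$-partite graph'' forces $|A|=k-1$, the remainder to be the balanced complete $r$-partite graph $T_{n-k+1,r}$, and $A$ to be universal, i.e.\ $G=G_0$ and uniqueness. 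I expect the two embeddings to be the main obstacle: one must secure mutual adjacency of the near-universal vertices and a large dense common neighbourhood to realise the matching together with all cross edges, and the final edge-count optimisation that promotes ``almost $G_0$'' to ``exactly $G_0$'' needs careful bookkeeping.
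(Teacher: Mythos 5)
The paper offers no proof of this lemma --- it is quoted verbatim from Simonovits \cite{Sim1974} --- so there is nothing internal to compare against. Your sketch is the standard stability-method proof of that theorem, and the lower-bound half (vertex-criticality forbids an embedding of $H$ into $K_{k-1}\vee T_{n-k+1,r}$, since the at most $k-1$ vertices landing in the apex set cannot lower $\chi(H)$ when deleted) is correct and complete as written.

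In the upper-bound half two steps are genuinely underspecified. (i) The quantity that must be bounded by $k-1$ is not the number of near-universal vertices but, in effect, $\sum_{i}\nu(G[V_i])$ taken over vertices of large cross-degree: your parenthetical claim that being adjacent to all but $o(n)$ vertices is ``equivalent'' to carrying an edge into one's own part is false at that stage of the argument, and your two embeddings (all $k$ matching edges hosted on apex vertices, or $k-1$ apex vertices plus a single internal edge) do not cover the general configuration of $a<k-1$ exceptional vertices together with $k-a$ independent internal edges among ordinary vertices. The repair is a mixed embedding in which each matching edge $x_iy_i$ of $H$ is realised either by an internal edge of $G$ or by an exceptional vertex matched to an ordinary neighbour inside its own part; this is precisely the shape the argument takes in the paper's spectral analogues, Lemmas \ref{lem-3-10}, \ref{lem-3-11} and \ref{lem-3-13}. (ii) Your embedding tacitly uses that $c(x_i)=c(y_i)$ for every edge of the critical matching $M$ in the chosen $r$-colouring $c$ of $H-M$. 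This is true but needs a sentence: any $k-1$ edges of $H$ have a vertex cover $S$ of size $k-1$, and $\chi(H-S)=\chi(H)$ by hypothesis, so removing $k-1$ edges can never lower the chromatic number; hence if $c(x_i)\ne c(y_i)$ for some $i$, the $k-1$ edges $M\setminus\{x_iy_i\}$ would already suffice, a contradiction. With these two repairs your outline is the known proof.
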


\begin{lemma}\label{lem-2}
For two integers $k$ and $p$ with $k\ge 2p+1$, assume $k=s(p+1)+r$ for some $s\geq1$ and $0\le r<p+1$. If $r=0$, then $\chi(C_{k}^p)=p+1$. If $r\ne 0$, let $r=ms+t$ for $m\ge 0$ and $1\leq t\leq s$, then $\chi(C_{k}^p)=p+m+2$ and $C_k^p$ is a color-$t$-critical.
\end{lemma}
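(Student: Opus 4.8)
The plan is to separate the two claims—computing $\chi(C_k^p)$ and verifying color-$t$-criticality—and to drive everything from a single structural fact. First I would observe that, since $k\ge 2p+1$, two vertices of $C_k^p$ are adjacent precisely when they lie in a common block of $p+1$ cyclically consecutive vertices; hence an independent set is exactly a set of vertices pairwise at cyclic distance at least $p+1$, and a colouring is proper if and only if every block of $p+1$ consecutive vertices receives distinct colours. Counting the gaps around the cycle gives $\alpha(C_k^p)=\lfloor k/(p+1)\rfloor=s$. Since every colour class is independent, $\chi(C_k^p)\ge k/\alpha=k/s$, so $\chi(C_k^p)\ge\lceil k/s\rceil$, which equals $p+1$ when $r=0$ and $p+m+2$ when $r\ne 0$ (using $k=s(p+m+1)+t$ with $1\le t\le s$).

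For the matching upper bound I would exhibit an explicit colouring. When $r=0$ the periodic colouring $i\mapsto i\bmod(p+1)$ is proper and uses $p+1$ colours. When $r\ne 0$, write $k=t(p+m+2)+(s-t)(p+m+1)$ and cut the cycle into $s$ consecutive arcs, $t$ of them of length $p+m+2$ and $s-t$ of length $p+m+1$; each arc has length at least $p+1$, so any block of $p+1$ consecutive vertices meets at most two arcs. Colour each arc by assigning colours $1,2,3,\dots$ to its vertices in order, drawing from the palette $\{1,\dots,p+m+2\}$. Within an arc all colours are distinct, and across a boundary the last $j$ vertices of one arc carry the top colours $\{L-j+1,\dots,L\}$ (where $L$ is that arc's length) while the first $p+1-j$ of the next carry $\{1,\dots,p+1-j\}$; since each arc has length $>p$ these two colour sets are disjoint. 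Thus every block is rainbow and $\chi(C_k^p)\le p+m+2$, matching the lower bound.

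To show color-$t$-criticality I would first produce $t$ edges whose deletion lowers the chromatic number. Repeat the arc decomposition but with the smaller palette $\{1,\dots,p+m+1\}$: colour every short arc $1,2,\dots,p+m+1$ and every long arc $1,2,\dots,p+m+1,1$, so the last vertex of each long arc repeats colour $1$. A check identical to the one above shows that the only monochromatic edges are the $t$ pairs (last vertex of a long arc, first vertex of the following arc); each such pair is at distance $1$, hence an edge, and these $t$ edges are pairwise vertex-disjoint, so they form a matching of size $t$. Deleting them yields a proper $(p+m+1)$-colouring, so $\chi$ drops below $p+m+2$.

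For the second requirement, removing any set $S$ of $t-1$ vertices must not lower the chromatic number. Every independent set of $C_k^p-S$ is independent in $C_k^p$, so $\alpha(C_k^p-S)\le s$, while $C_k^p-S$ has $k-(t-1)=s(p+m+1)+1$ vertices; therefore $\chi(C_k^p-S)\ge(s(p+m+1)+1)/s>p+m+1$, forcing $\chi(C_k^p-S)\ge p+m+2$. This completes color-$t$-criticality. I expect the main obstacle to be the bookkeeping in the two colouring constructions—specifically, proving that the arc colouring is rainbow on every boundary block and that the defective colouring creates exactly $t$ monochromatic edges forming a matching, with no stray conflicts arising when two long arcs are adjacent; the degenerate case $s=1$ (where $C_k^p=K_k$) should be recorded separately but is immediate.
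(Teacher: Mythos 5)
Your proposal is correct and follows essentially the same route as the paper: an explicit arc/periodic colouring for the upper bounds, a pigeonhole-type lower bound (you phrase it as $\chi\ge k/\alpha$ with $\alpha(C_k^p)=s$, which is the same counting argument), and the identical vertex-deletion count $k-(t-1)=s(p+m+1)+1$ for criticality. The only cosmetic difference is your choice of the $t$ critical edges (the boundary edges between a long arc and its successor, versus the paper's within-arc edges obtained by recolouring the last vertex of each long arc), and both choices work.
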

\begin{proof}
Assume $V(C_k^p)=\{v_0,v_1,\ldots,v_{k-1}\}$ such that $v_0v_1\cdots v_{k-1}v_0$ forms the cycle $C_k$. First, we consider the case $r=0$.  Color the vertex $v_i$ with the color $a$ if $i\equiv a\pmod p+1$. Let $V_a=\{v_i\mid i\equiv a\pmod {p+1}\}$ be the set of vertices with color $a$. Clearly, $V_a$ is an independent set. Thus, $\chi(C_k^p)\le p+1$. On the other hand, if we color $C_k^{p}$ by $\{0,1,\ldots,p-1\}$, then let $U_a$ be the set of vertices with color $a$. Since $k=sp+s$, there exists $U_b$ with $|U_b|\ge s+1$. We claim that $U_b$ cannot be an independent set. Since otherwise, any pair of vertices in $U_b$ has distance at least $p+1$ in $C_k$. Thus, there would be at least $(s+1)(p+1)$ vertices in $C_k$, which is impossible. Hence, $\chi(C_k^p)\ge p+1$. This yields $\chi(C_k^p)=p+1$.

Next, we consider the case $r\ne 0$. Since $k=s(p+1)+r=s(p+1)+ms+t=t(p+m+2)+(s-t)(p+m+1)$, we construct a coloring $f$: $V(C_k^p)\rightarrow \{0,\ldots,p+m+1\}$ by 
\[f(v_i)=\left\{\begin{array}{ll}
a,&\text{ if }0\le i\le  t(p+m+2)-1 \text{ and }a\equiv i\pmod{p+m+2},\\[2mm]
b,&\text{ if }t(p+m+2)\le i\le k-1\text{ and }b\equiv i-t(p+m+2) \pmod{p+m+1}.
\end{array}\right.\]
For $0\le a\le p+m+1$, let $V_a$ be the set of vertices colored $a$. Clearly, $V_a$ is an independent set. Therefore, $\chi(C_k^p)\le p+m+2$. If $C_k^p$ is colored by $\{0,1,\ldots,p+m+1\}$, let $U_a$ be the set of vertices colored $a$. Since $k=s(p+m+1)+t$ with $1\le t\le s$, there must be a set $U_b$ contains $s+1$ vertices. Similarly, $U_b$ could not be an independent set. Hence, $\chi(C_k^p)\ge p+m+2$. This leads to $\chi(C_k^p)=p+m+2$. 

Finally, we show that the graph $C_k^p$ is a color-$t$-critical when $r\ne 0$. On the one hand, let 
\[B=\{v_{p+m}v_{p+m+1},\ldots,v_{(t-1)(p+m+2)+(p+m)}v_{t(p+m+2)-1}\}\subseteq E(C_k^p).\]
We claim that $\chi(C_k^p-B)\leq p+m+1<\chi(C_k^p)=p+m+2$. Define a coloring $g$: $V(C_k^p-B)\rightarrow \mathbb{Z}_{p+m+1}$ by setting
\[g(v_i)=\left\{\begin{array}{ll}
p+m,& \text{ if } 0\le i\le  t(p+m+2)-1\text{ and }i\equiv p+m+1\pmod {p+m+2},\\[2mm]
f(v_i), &\text{ otherwise }.
\end{array}\right.\]
It is easy to verify that $g$ is a proper coloring and thus $\chi(C_k^p-B)\leq p+m+1<\chi(C_k^p)=p+m+2$. On the other hand, for any subset $S\subseteq V(C_k^p)$ with $t - 1$ vertices, we claim that $\chi(C_k^p - S) = \chi(C_k^p)$. Suppose to the contrary that there exists a subset $S\subset C_k^p$ of $t - 1$ vertices such that $\chi(C_k^p - S) < \chi(C_k^p) = p + m + 2$, where $0 < t < s$. Then, if we color the graph $C_k^p-S$ using $p + m + 1$ colors, there must be at least $s + 1$ vertices assigned the same color. Therefore, each pair of these vertices has distance at least $p+1$ in $C_k$. This indicates there are at least $(s+1)(p+1)$ vertices, which is impossible since $(s+1)(p+1)>k$. Therefore, the graph $C_k^p$ is a color-$t$-critical. 
\end{proof}

\begin{proof}[{\bf Proof of Theorem \ref{thm-1}}]
From  Lemma \ref{lem-2}, we know that $\chi(C_k^p)=p+m+2$ and $C_k^p$ is a color-$t$-critical when $r\ne 0$. Thus, Lemma \ref{lem-1} indicates $\EX(n,C_k^p)=\{K_{t-1}\vee T_{n-t+1,p+m+1}\}$ for $r\ne 0$ and large enough $n$.
\end{proof}

\section{Proof of Theorem \ref{thm-2}}
For brevity, we always assume $ n $ to be sufficiently large throughout the paper. Let $ G\in\textrm{SPEX}(n,C_k^p) $ be an extremal graph, and let $ \bm{x} = (x_1, x_2, \ldots, x_n)^T $ be the Perron vector of $ G $, that is, the eigenvector corresponding to $\lambda(G)$ with maximum component being $1$. We will establish a series of lemmas to characterize the structure of $ G $. For convenience, let $p' = p + m + 1 \geq 2$, and let $\alpha$, $\varepsilon$, and $\varepsilon_1$ be sufficiently small positive constants satisfying 
\[\boxed{\alpha \gg \varepsilon_1 \gg \varepsilon.}\]

\begin{lemma}[\cite{DKLNTW}]\label{lem-3-1}
     Let $F$ be a graph with chromatic number $\chi(F) = r+1$.  For every $\varepsilon > 0$, there exist $\delta > 0$ and $n_{0}$ such that if $G$ is an $F$-free graph on $n\ge n_0$  vertices with $\lambda ( G) \ge \left ( 1- \frac 1r- \delta \right ) n$, then $G$ can be obtained from $T_{n, r}$ by adding and deleting at most $\varepsilon n^{2}$ edges.
\end{lemma}

Given a graph G, the vertex partition $\Pi: V(G)=V_1\cup V_2\cup\cdots\cup V_k$ is said to be an equitable partition if, for each $u\in V_i,|V_j\cap N(u)|=b_{ij}$ is a constant depending only on $i,j$ $(1\leq i,j\leq k)$. The matrix $B_\Pi=(b_{ij})$ is called the quotient matrix of $G$ with respect to $\Pi$.

\begin{lemma}[\cite{CRS}]\label{lem-3-3}
    Let $\Pi\colon V(G) = V_1\cup V_2\ldots\cup V_k$ be an equitable partition of $G$ with quotient matrix $B_{\Pi}$. Then, the largest eigenvalue of $B_{\Pi}$ is just the spectral radius of $G$.
\end{lemma}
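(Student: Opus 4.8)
The final statement is Lemma~\ref{lem-3-3}, a classical fact: for any equitable partition $\Pi$ of $G$ with quotient matrix $B_\Pi$, the largest eigenvalue of $B_\Pi$ equals the spectral radius $\lambda(G)$. The plan is to exhibit a direct correspondence between eigenvectors of $B_\Pi$ and \emph{constant-on-cells} eigenvectors of the adjacency matrix $A(G)$, and then use the Perron--Frobenius theorem to pin down which eigenvalue of $B_\Pi$ is selected.

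\begin{proof}[Proof of Lemma \ref{lem-3-3}]
Let $A=A(G)$ and write $n_i=|V_i|$. For a vector $\bm{y}=(y_1,\dots,y_k)^T\in\mathbb{R}^k$, define its \emph{lift} $\widetilde{\bm{y}}\in\mathbb{R}^{V(G)}$ by setting $\widetilde{\bm{y}}_u=y_i$ whenever $u\in V_i$; that is, $\widetilde{\bm{y}}$ is constant on each cell. First I would verify the key intertwining identity: for any $u\in V_i$,
\[
(A\widetilde{\bm{y}})_u=\sum_{w\in N(u)}\widetilde{\bm{y}}_w=\sum_{j=1}^k\ \bigl|V_j\cap N(u)\bigr|\,y_j=\sum_{j=1}^k b_{ij}\,y_j=(B_\Pi\bm{y})_i,
\]
where the third equality is precisely the defining property of an equitable partition ($|V_j\cap N(u)|=b_{ij}$ depends only on $i,j$). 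Hence $(A\widetilde{\bm{y}})_u$ depends only on the cell of $u$, and $A\widetilde{\bm{y}}=\widetilde{B_\Pi\bm{y}}$. Consequently, if $B_\Pi\bm{y}=\mu\bm{y}$ for some eigenvalue $\mu$, then $A\widetilde{\bm{y}}=\widetilde{\mu\bm{y}}=\mu\widetilde{\bm{y}}$, so $\mu$ is an eigenvalue of $A$ with eigenvector $\widetilde{\bm{y}}$ (which is nonzero since $\bm{y}\neq\bm{0}$). This shows every eigenvalue of $B_\Pi$ is an eigenvalue of $G$; in particular $\lambda(B_\Pi)\le\lambda(G)$.

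It remains to prove the reverse inequality, which is where the structure of the extremal problem must give way to the Perron--Frobenius theorem. Since $G$ is connected (the extremal graph under consideration is connected, as a join $K_{t-1}\vee T_{n-t+1,p'}$ with $p'\ge 2$), $A$ is irreducible and nonnegative, so by Perron--Frobenius $\lambda(G)$ is a simple eigenvalue with a strictly positive eigenvector $\bm{x}$. The main point is to show $\bm{x}$ is constant on each cell $V_i$, so that it is itself a lift. To see this, apply the intertwining identity in reverse: because $\bm{x}$ satisfies $A\bm{x}=\lambda(G)\bm{x}$ and each vertex $u\in V_i$ has exactly $b_{ij}$ neighbours in $V_j$, one checks that the average of $\bm{x}$ over each cell satisfies the quotient eigenvalue equation, and a standard convexity/uniqueness argument (or a direct Perron-vector uniqueness argument) forces $x_u$ to equal this average for every $u\in V_i$. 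Concretely, define $\bm{z}\in\mathbb{R}^k$ by $z_i=\tfrac{1}{n_i}\sum_{u\in V_i}x_u$; summing $A\bm{x}=\lambda(G)\bm{x}$ over $u\in V_i$ and using $\sum_{u\in V_i}|V_j\cap N(u)|=\sum_{w\in V_j}|V_i\cap N(w)|=b_{ji}n_j$ gives $B_\Pi^{\,\flat}\bm{z}=\lambda(G)\bm{z}$ for a weighted version of $B_\Pi$; I would then invoke the uniqueness of the Perron vector to conclude $\bm{x}=\widetilde{\bm{z}}$ exactly.

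Once $\bm{x}=\widetilde{\bm{z}}$ for some positive $\bm{z}\in\mathbb{R}^k$, the intertwining identity run backwards yields $\widetilde{B_\Pi\bm{z}}=A\widetilde{\bm{z}}=A\bm{x}=\lambda(G)\bm{x}=\lambda(G)\widetilde{\bm{z}}=\widetilde{\lambda(G)\bm{z}}$, and since the lift map is injective (distinct $\bm{z}$ give distinct constant-on-cell vectors, as every cell is nonempty) we may strip the tildes to obtain $B_\Pi\bm{z}=\lambda(G)\bm{z}$. Thus $\lambda(G)$ is an eigenvalue of $B_\Pi$ with the positive eigenvector $\bm{z}$; combined with $\lambda(B_\Pi)\le\lambda(G)$ from the first paragraph, this gives $\lambda(B_\Pi)=\lambda(G)$, as required. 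I expect the main obstacle to be the second paragraph: establishing that the global Perron vector is genuinely constant on cells (rather than merely that the cell-averages solve a quotient equation), which is the crux that makes the quotient matrix recover the \emph{full} spectral radius rather than just some eigenvalue below it.
\end{proof}
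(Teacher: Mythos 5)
The paper gives no proof of this lemma at all --- it is quoted from \cite{CRS} as a known fact --- so your proposal can only be measured against the standard textbook argument, which your first paragraph reproduces correctly and completely: the intertwining identity $A\widetilde{\bm{y}}=\widetilde{B_\Pi\bm{y}}$ shows every eigenvalue of $B_\Pi$ is an eigenvalue of $A$, hence $\lambda(B_\Pi)\le\lambda(G)$. That half is fine.

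Your second paragraph, however, has the logical dependencies backwards, and this is where the genuine gap sits. You declare the crux to be that the Perron vector $\bm{x}$ is constant on cells, justified only by an unspecified ``standard convexity/uniqueness argument''; and your fallback route is circular as written: to ``invoke the uniqueness of the Perron vector to conclude $\bm{x}=\widetilde{\bm{z}}$'' you need $\widetilde{\bm{z}}$ to be a $\lambda(G)$-eigenvector of $A$, i.e.\ you need $B_\Pi\bm{z}=\lambda(G)\bm{z}$ --- which is essentially the statement being proved. Moreover, you derive $B_\Pi^{\,\flat}\bm{z}=\lambda(G)\bm{z}$ for an unexamined ``weighted version'' of $B_\Pi$ and never bridge from the spectrum of $B_\Pi^{\,\flat}$ to that of $B_\Pi$. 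The repair is one line, and it makes the constancy claim unnecessary: since $n_ib_{ij}=n_jb_{ji}$ (both sides count the edges between $V_i$ and $V_j$, and both equal $2e(V_i)$ when $i=j$), your averaging computation $\lambda(G)\,n_iz_i=\sum_j b_{ji}n_jz_j$ becomes $\lambda(G)z_i=\sum_j b_{ij}z_j$, i.e.\ $B_\Pi\bm{z}=\lambda(G)\bm{z}$ with $\bm{z}>\bm{0}$; your $B_\Pi^{\,\flat}$ is literally $B_\Pi$ (equivalently $B_\Pi^{\,\flat}=D^{-1}B_\Pi^TD$ with $D=\mathrm{diag}(n_i)$, which is cospectral with $B_\Pi$). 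This gives $\lambda(B_\Pi)\ge\lambda(G)$ directly. The same fix also removes your appeal to connectivity --- which you justified by the ambient extremal graph $K_{t-1}\vee T_{n-t+1,p'}$, out of place in a lemma stated for arbitrary $G$: taking $\bm{x}$ to be any nonnegative $\lambda(G)$-eigenvector (supported on a component attaining $\lambda(G)$) yields $\bm{z}\ge\bm{0}$, $\bm{z}\ne\bm{0}$, and the computation goes through unchanged. If you still want constancy of the Perron vector on cells for connected $G$, it then follows as a corollary of the simplicity of $\lambda(G)$, rather than serving as an input.
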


\begin{lemma}\label{lem-3-4}
    $\lambda(G)\geq \lambda(K_{t-1}\vee T_{n-t+1,p'})\geq (1-1/p')n.$
\end{lemma}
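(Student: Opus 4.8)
The plan is to split the statement into its two inequalities, which have quite different characters. The first inequality, $\lambda(G)\ge\lambda(K_{t-1}\vee T_{n-t+1,p'})$, is a pure consequence of extremality. By Theorem~\ref{thm-1} the graph $K_{t-1}\vee T_{n-t+1,p'}$ (recall $p'=p+m+1$) is the unique member of $\EX(n,C_k^p)$, so in particular it is $C_k^p$-free. Since $G\in\SPEX(n,C_k^p)$ maximises the spectral radius over all $C_k^p$-free graphs on $n$ vertices, using $K_{t-1}\vee T_{n-t+1,p'}$ as a competitor in this maximisation immediately gives $\lambda(G)\ge\lambda(K_{t-1}\vee T_{n-t+1,p'})$.

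For the second inequality I would estimate $\lambda(H)$ directly, where $H:=K_{t-1}\vee T_{n-t+1,p'}$. The key observation is that $H$ is itself a complete multipartite graph: the clique contributes $t-1$ singleton parts, and the Turán graph contributes $p'$ balanced parts $W_1,\dots,W_{p'}$ of sizes $n_1,\dots,n_{p'}$ with $\sum_i n_i=n-t+1$ and $\max_i n_i-\min_i n_i\le 1$. I would use the equitable partition $\Pi$ of $H$ whose cells are the clique (a single cell of size $t-1$, since its vertices are mutually symmetric) together with $W_1,\dots,W_{p'}$; this partition is equitable, so by Lemma~\ref{lem-3-3} the spectral radius of $H$ is the largest eigenvalue of the associated $(p'+1)\times(p'+1)$ quotient matrix. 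Concretely, since the Perron vector of a complete multipartite graph is constant on parts, $\lambda(H)$ is the largest root of
\[
f(\lambda):=\frac{t-1}{\lambda+1}+\sum_{i=1}^{p'}\frac{n_i}{\lambda+n_i}=1 .
\]
As $f$ is strictly decreasing on $(0,\infty)$, it suffices to verify $f(\mu)\ge 1$ at $\mu:=(1-1/p')n$, which then forces $\lambda(H)\ge\mu$.

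The heart of the matter is this single inequality $f(\mu)\ge 1$. Writing $\sum_i\frac{n_i}{\mu+n_i}=p'-\mu\sum_i\frac{1}{\mu+n_i}$, it rearranges to
\[
\mu\sum_{i=1}^{p'}\frac{1}{\mu+n_i}\le (p'-1)+\frac{t-1}{\mu+1}.
\]
I would bound the left-hand side using the near-equality of the $n_i$: each equals $\lfloor(n-t+1)/p'\rfloor$ or $\lceil(n-t+1)/p'\rceil$, so a tangent-line (second-order) expansion of the convex map $x\mapsto 1/(\mu+x)$ about the mean $\bar n=(n-t+1)/p'$ gives $\sum_i\frac{1}{\mu+n_i}=\frac{p'}{\mu+\bar n}+O(n^{-2})$, the first-order term vanishing because $\sum_i(n_i-\bar n)=0$ and the remainder being controlled by $\sum_i(n_i-\bar n)^2\le p'$. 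A direct computation yields $\mu+\bar n=n-(t-1)/p'$, so the left-hand side equals $(p'-1)+\frac{(p'-1)(t-1)}{p'n}+O(n^{-1})$, while the clique term is $\frac{t-1}{\mu+1}=\frac{(t-1)p'}{(p'-1)n}+O(n^{-2})$. Since $\frac{p'-1}{p'}<\frac{p'}{p'-1}$ for $p'\ge 2$, the clique term dominates the deficit and the inequality holds for large $n$.

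I expect the main obstacle to be exactly this last estimate. When $p'\nmid(n-t+1)$ the bare Turán graph already has spectral radius slightly below $(1-1/p')n$, so the imbalance of the parts produces a genuine deficit; the argument succeeds only because the clique contribution $\frac{t-1}{\mu+1}$ is large enough to absorb it. Keeping the error terms honest — simultaneously controlling the imbalance sum and the singleton correction — is the one delicate point, while everything else is bookkeeping. It is worth noting that for the subsequent application even the weaker bound $\lambda(H)\ge(1-1/p'-\delta)n$ would suffice, since Lemma~\ref{lem-3-1} already carries a $\delta$-slack.
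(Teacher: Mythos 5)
Your handling of the first inequality coincides with the paper's: the competitor $K_{t-1}\vee T_{n-t+1,p'}$ is $C_k^p$-free (you get this from Theorem \ref{thm-1}; the paper asserts it directly, and it also follows from the color-$t$-criticality in Lemma \ref{lem-2}), so extremality of $G$ in $\SPEX(n,C_k^p)$ gives the bound. For the second inequality you take a genuinely different route. The paper uses the crudest possible tool — the Rayleigh quotient with the all-ones vector, $\lambda(H)\ge 2e(H)/n$ — and then just counts edges of $K_{t-1}\vee T_{n-t+1,p'}$; it never touches Lemma \ref{lem-3-3}. You instead view $H$ as a complete multipartite graph, extract the exact characteristic equation $\frac{t-1}{\lambda+1}+\sum_i\frac{n_i}{\lambda+n_i}=1$ from the quotient matrix, and verify $f(\mu)\ge 1$ at $\mu=(1-1/p')n$ by a second-order expansion about the mean part size. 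Your version is more work and more delicate, but it is sharp and makes visible exactly where the inequality comes from; the paper's version is three lines. Two remarks. First, the caveat you flag is real and applies to the paper as well: when $t=1$ both compensating terms (your clique term $\frac{t-1}{\mu+1}$, the paper's $(t-1)(\frac{n}{p'}-1)$) vanish, $H=T_{n,p'}$, and for $p'\nmid n$ one has $\lambda(T_{n,p'})<(1-1/p')n$, so the stated inequality is literally false in that case; the correct statement is $\lambda(H)\ge(1-1/p')n-O(1)$, which, as you note, is all that the downstream application through Lemma \ref{lem-3-1} requires. You should say explicitly that your final comparison assumes $t\ge2$. Second, a small bookkeeping point: after multiplying by $\mu$ your error term must be $O(n^{-2})$ (which it is, since the convexity defect in $\sum_i(\mu+n_i)^{-1}$ is $O(n^{-3})$), not the $O(n^{-1})$ you wrote at one point — an $O(n^{-1})$ error would swallow the very $\Theta((t-1)/n)$ terms you are comparing. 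With those two clarifications your argument is correct.
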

\begin{proof}
Clearly, $K_{t-1}\vee T_{n-t+1,p'}$ is $C_k^p$-free. Since $e(T_{n-t+1,p'})\geq (1-\frac{1}{p'})\frac{(n-t+1)^2}{2}-\frac{p'}{8}$, we have 
$$
e(K_{t-1}\vee T_{n-t+1,p'})\ge (t-1)(n-1)+(1-\frac{1}{p'})\frac{(n-t+1)^2}{2}-\frac{p'}{8}.
$$
Using the Rayleigh quotient gives
\begin{align*}
    \lambda(G) &\ge \lambda(K_{t-1}\vee T_{n-t+1,p'})\ge \frac{\mathbf{1}^TA(K_{t-1}\vee T_{n-t+1,p'})\mathbf{1}}{\mathbf{1}^T\mathbf{1}}=\frac{2e(K_{t-1}\vee T_{n-t+1,p'})}{n}\\
    &\ge \frac{2}{n}\left(\frac{n^2}{2}-\frac{n^2}{2p'}+(t-1)\left(\frac{n}{p'}-1\right)+\frac{(t-1)^2}{2}-\frac{(t-1)^2}{2p'}-\frac{p'}{8}\right)\\
    &\ge (1-\frac{1}{p'})n.
    \end{align*}
for sufficient large $n$, as desired.
\end{proof}

Next we show that $G$ contains a large maximum cut.
\begin{lemma}\label{lem-3-5}
  We have $e(G)\geq e(T_{n,p'})-\varepsilon n^2$ . Furthermore, $G$ admits a partition $V(G)=\bigcup_{i=1}^{p'}V_i$ such that $ \sum_{1 \leq i < j \leq p'} e(V_i, V_j)$ attains the maximum, $\sum_{i=1}^{p'} e(V_i) \leq \varepsilon n^2$, and $||V_i|-n/p'|\leq 3\sqrt{\varepsilon} n$ for $1\le i\le p'$.
\end{lemma}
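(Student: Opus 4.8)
The plan is to derive everything from the stability result of Lemma~\ref{lem-3-1} applied with $r=p'$. First I note that $G$ is $C_k^p$-free and, by Lemma~\ref{lem-2}, $\chi(C_k^p)=p+m+2=p'+1$, so the relevant parameter in Lemma~\ref{lem-3-1} is exactly $r=p'$. Lemma~\ref{lem-3-4} supplies $\lambda(G)\ge (1-1/p')n\ge (1-1/p'-\delta)n$, so the hypothesis of the stability lemma is met for the chosen $\varepsilon$ (shrinking $\delta$ and enlarging $n_0$ as needed). Hence, after relabeling the vertices of $G$ along those of $T_{n,p'}$, the graph $G$ is obtained from $T_{n,p'}$ by adding and deleting at most $\varepsilon n^2$ edges in total. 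Since only the deleted edges can decrease the edge count, $e(G)=e(T_{n,p'})-(\text{deleted})+(\text{added})\ge e(T_{n,p'})-\varepsilon n^2$, which is the first assertion.

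Next I would fix the partition $V(G)=\bigcup_{i=1}^{p'}V_i$ to be one that \emph{maximizes} the number of crossing edges $\sum_{1\le i<j\le p'}e(V_i,V_j)$; such a partition exists since there are finitely many partitions into $p'$ parts. Because $e(G)=\sum_{i<j}e(V_i,V_j)+\sum_i e(V_i)$ is fixed, maximizing the cut is the same as minimizing the total number of internal edges $\sum_i e(V_i)$. Now the $p'$-partition $W_1,\dots,W_{p'}$ coming from the stability labeling (the parts of $T_{n,p'}$) has all of its internal edges among the at most $\varepsilon n^2$ edges \emph{added} to $T_{n,p'}$, so $\sum_i e(W_i)\le \varepsilon n^2$. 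Since our partition minimizes internal edges, $\sum_{i=1}^{p'}e(V_i)\le \sum_i e(W_i)\le \varepsilon n^2$, giving the second assertion.

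For the balance of the parts I would combine the two estimates just obtained with a purely counting upper bound on the cut. From $\sum_i e(V_i)\le\varepsilon n^2$ and $e(G)\ge e(T_{n,p'})-\varepsilon n^2$ I get the lower bound $\sum_{i<j}e(V_i,V_j)=e(G)-\sum_i e(V_i)\ge e(T_{n,p'})-2\varepsilon n^2$. On the other hand, using $\sum_i|V_i|=n$,
\[
\sum_{1\le i<j\le p'}e(V_i,V_j)\le\sum_{1\le i<j\le p'}|V_i|\,|V_j|
=\frac12\Big(n^2-\sum_{i=1}^{p'}|V_i|^2\Big)
=\Big(1-\tfrac1{p'}\Big)\frac{n^2}{2}-\frac12\sum_{i=1}^{p'}\Big(|V_i|-\tfrac{n}{p'}\Big)^2 .
\]
Recalling $e(T_{n,p'})\ge(1-\tfrac1{p'})\tfrac{n^2}{2}-\tfrac{p'}{8}$, comparing the two bounds cancels the leading term $(1-\tfrac1{p'})\tfrac{n^2}{2}$ and leaves $\tfrac12\sum_i\big(|V_i|-\tfrac{n}{p'}\big)^2\le \tfrac{p'}{8}+2\varepsilon n^2$. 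Since $p'$ is a constant, for $n$ large the additive term $p'/8$ is at most $\varepsilon n^2$, so $\sum_i\big(|V_i|-\tfrac{n}{p'}\big)^2\le 5\varepsilon n^2$. In particular each summand satisfies $\big||V_i|-\tfrac{n}{p'}\big|\le\sqrt{5\varepsilon}\,n<3\sqrt{\varepsilon}\,n$, which is the third assertion.

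I do not expect a genuine obstacle here: the entire content is packaged inside Lemma~\ref{lem-3-1}, and the remaining work is the standard translation of ``close to $T_{n,p'}$ in edit distance'' into ``maximum cut is large, internal edges are few, parts are nearly balanced.'' The only points requiring care are bookkeeping ones: ensuring the stability parameter is taken equal to (or below) the target $\varepsilon$ so the three stated bounds come out with exactly the constants $\varepsilon$, $\varepsilon$, and $3\sqrt{\varepsilon}$, and taking $n$ large enough to absorb the constant $p'/8$ into $\varepsilon n^2$ in the final balancing step.
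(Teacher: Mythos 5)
Your proposal is correct and follows essentially the same route as the paper: apply the stability lemma (Lemma~\ref{lem-3-1}) using the spectral bound of Lemma~\ref{lem-3-4}, pass to a max-cut partition to inherit the bound on internal edges, and then balance the parts by comparing the cut against $\sum_{i<j}|V_i||V_j|$. The only (immaterial) difference is in the last step, where you use the full sum-of-squares identity $\sum_{i<j}|V_i||V_j|=(1-\tfrac1{p'})\tfrac{n^2}{2}-\tfrac12\sum_i(|V_i|-\tfrac{n}{p'})^2$, while the paper isolates the part of maximal deviation and applies H\"older's inequality to the rest; both yield $||V_i|-n/p'|\le 3\sqrt{\varepsilon}\,n$.
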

\begin{proof}
    Since $G$ is $C^p_k$-free, by Lemmas \ref{lem-3-1} and \ref{lem-3-4}, for sufficiently large $n$, there exists a partition of $V(G)=U_1\cup U_2 \cdots \cup U_{p'}$ such that $e(G)\geq e(T_{n,p'})-\varepsilon n^2$, $\sum^{p'}_{i=1}e(U_i)\leq \varepsilon n^2$, and $\lfloor\frac{n}{p'}\rfloor\leq |U_i|\leq \lceil\frac{n}{p'}\rceil$ for $1\le i\le p'$. Therefore, $G$ has a partition $V(G)=V_1\cup\cdots\cup V_{p'}$ such that the number of crossing edges of $G$ attains the maximum, and 
    $$\sum^{p'}_{i=1}e(V_i)\leq \sum^{p'}_{i=1}e(U_i)\leq \varepsilon n^2.$$
    Let $a=\max\{||V_j|-\frac{n}{p'}|, 1\le j\le p'\}$. Without loss of generality, we may assume that $||V_j|-\frac{n}{p'}|=a.$ Then
    \begin{align*}
        e(G)&\leq \sum_{1\leq i<j\leq p'}|V_i||V_j|+\sum^{p'}_{i=1}e(V_i)\\
        &\leq |V_1|(n-|V_1|)+\sum_{2\leq i<j\leq p'}|V_i||V_j|+\varepsilon n^2\\
        &=|V_1|(n-|V_1|)+\frac{1}{2}\left(\left(\sum^{p'}_{j=2}|V_j|\right )^2-\sum^{p'}_{j=2}|V_j|^2\right)+\varepsilon n^2\\
        &\leq |V_1|(n-|V_1|)+\frac{1}{2}(n-|V_1|)^2-\frac{1}{2(p'-1)}(n-|V_1|)^2+\varepsilon n^2\\
        &<-\frac{p'}{2(p'-1)}a^2+\frac{p'-1}{2p'}n^2 +\varepsilon n^2,
    \end{align*}
    where the last second inequality holds by H\"older's inequality, and the last inequality holds since $||V_1|-\frac{n}{r}|=a$.  On the other hand, since $e(G)\geq e(T_{n,p'})-\varepsilon n^2$, we have
    $$e(G)\geq e(T_{n,p'})-\varepsilon n^2 \geq \frac{p'-1}{2p'} n^2-\frac{p'}{8}- \varepsilon n^2 \geq \frac{p'-1}{2p'} n^2-2\varepsilon n^2.$$
    Therefore, $\frac{p'}{2(p'-1)}a^2<3\varepsilon n^2$, which implies that $a<\sqrt{\frac{6(p'-1)\varepsilon}{p'}n^2}<3\sqrt{\varepsilon} n$. 
    
    The proof is completed.
\end{proof}

In what follows, we will always assume $V(G)=\sum_{i=1}^{p'} V_i$ is the partition characterized in Lemma \ref{lem-3-5}. Let $L=\{v\in V(G)\mid d(v)\leq (1-\frac{1}{p'}-10\sqrt{\varepsilon})n\}$. 
\begin{lemma}\label{lem-3-6}
$|L|\leq \sqrt{\varepsilon}n$
\end{lemma}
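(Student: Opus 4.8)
The plan is to bound $|L|$ by a standard spectral double-counting argument. Let me sketch the structure of the proof I would write.

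\textbf{Setup via the Perron vector.} First I would recall that $\bm{x}$ is the Perron vector normalized so that $\max_i x_i = 1$, say $x_u = 1$. From the eigenvalue equation $\lambda(G) x_u = \sum_{v \sim u} x_v$ and Lemma \ref{lem-3-4} we have $\lambda(G) \geq (1 - 1/p')n$, which is the quantitative lower bound driving everything. The idea is that a vertex of low degree contributes little to the quadratic form $\bm{x}^T A(G) \bm{x} = \lambda(G)\,\bm{x}^T\bm{x}$, so if $L$ were large we could delete all of $L$ and add back a few cleverly chosen edges to strictly increase the spectral radius, contradicting extremality.

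\textbf{Key steps in order.} I would proceed as follows. (i) Use Lemma \ref{lem-3-5} together with $e(G) \geq e(T_{n,p'}) - \varepsilon n^2$ to control the global edge count and hence $\sum_v d(v) = 2e(G) \geq (1 - 1/p')n^2 - O(\varepsilon n^2)$. (ii) Suppose for contradiction that $|L| > \sqrt{\varepsilon}\,n$. Each $v \in L$ has $d(v) \leq (1 - 1/p' - 10\sqrt{\varepsilon})n$, which is deficient by about $10\sqrt{\varepsilon}\,n$ compared to the typical degree $(1 - 1/p')n$ forced by $\lambda(G)$. (iii) Compare the total degree deficiency contributed by $L$, roughly $|L| \cdot 10\sqrt{\varepsilon}\,n > 10\varepsilon n^2$, against the total surplus available from vertices of higher-than-typical degree, which is capped by the edge-count bound from step (i). The point is that the degree sequence cannot simultaneously have $\sum_v d(v)$ close to $(1 - 1/p')n^2$ and have more than $\sqrt{\varepsilon}\,n$ vertices each deficient by $10\sqrt{\varepsilon}\,n$, because a vertex degree is capped at $n-1$, so the surplus per vertex is at most $\tfrac{1}{p'}n$; counting shows the books cannot balance. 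Formally I would write
\[
2e(G) = \sum_{v \in L} d(v) + \sum_{v \notin L} d(v) \leq |L|\Big(1 - \tfrac{1}{p'} - 10\sqrt{\varepsilon}\Big)n + (n - |L|)(n-1),
\]
and show that if $|L| > \sqrt{\varepsilon}\,n$ this upper bound is strictly below $e(T_{n,p'}) - \varepsilon n^2 \leq e(G)$, a contradiction.

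\textbf{Main obstacle and refinements.} The delicate point is getting the constants to line up: the gap $10\sqrt{\varepsilon}$ in the definition of $L$ must beat the $\varepsilon n^2$ slack coming from Lemma \ref{lem-3-5}, and this is exactly why the threshold is stated with $10\sqrt{\varepsilon}$ rather than a generic $c\sqrt{\varepsilon}$. I expect the hard part to be verifying that after the crude degree-counting inequality above is rearranged, the coefficient of $\varepsilon n^2$ on the deficiency side genuinely dominates, for instance by keeping careful track of the $(n - |L|)(n-1)$ term and comparing $\tfrac{p'-1}{2p'}n^2$ against $\tfrac12(n-|L|)(n-1) + \tfrac12|L|(1 - 1/p' - 10\sqrt{\varepsilon})n$. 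Since $\alpha \gg \varepsilon_1 \gg \varepsilon$ and $n$ is large, all lower-order additive terms (the $-p'/8$, the $-1$ in $n-1$, etc.) are absorbed, and the contradiction $\sqrt{\varepsilon}\,n < |L| \leq \sqrt{\varepsilon}\,n$ follows. I would close by noting that this is the expected quantitative ``few low-degree vertices'' lemma that sets up the later structural analysis of $G$.
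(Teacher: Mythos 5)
There is a genuine gap: the closing counting inequality does not yield a contradiction. Your plan is to bound $2e(G)=\sum_{v\in L}d(v)+\sum_{v\notin L}d(v)\leq |L|(1-\tfrac{1}{p'}-10\sqrt{\varepsilon})n+(n-|L|)(n-1)$ and show this falls below $2e(T_{n,p'})-2\varepsilon n^2$ when $|L|>\sqrt{\varepsilon}\,n$. But with $|L|\approx\sqrt{\varepsilon}\,n$ the right-hand side is roughly $n^2-\tfrac{\sqrt{\varepsilon}}{p'}n^2$, which exceeds $2e(T_{n,p'})\approx n^2-\tfrac{1}{p'}n^2$ by about $\tfrac{1-\sqrt{\varepsilon}}{p'}n^2$; the inequality goes the wrong way by a constant times $n^2$. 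The underlying problem is one of scales: the total degree deficit contributed by $L$ is only about $10\varepsilon n^2$, while nothing you have established prevents the vertices outside $L$ from carrying a compensating surplus — even after invoking the $W$-type bounds, as many as $\sqrt{\varepsilon}\,n$ vertices may each exceed the typical degree $(1-\tfrac{1}{p'})n$ by up to $n/p'$, and every vertex may exceed it by $O(\sqrt{\varepsilon}\,n)$ from part-size imbalance, allowing $O(\sqrt{\varepsilon}\,n^2)$ of surplus against only $O(\varepsilon n^2)$ of deficit. Since $\varepsilon\ll\sqrt{\varepsilon}$, the books balance and no contradiction arises; a pure degree-sum argument cannot work at this precision, and your proposal never uses $C_k^p$-freeness beyond the edge count.

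The paper's proof uses a different and essential idea: assuming $|L|>\sqrt{\varepsilon}\,n$, it deletes a set $L'\subseteq L$ of exactly $\lfloor\sqrt{\varepsilon}\,n\rfloor$ low-degree vertices and compares $e(G-L')$ not with a global degree sum but with the Tur\'an number $\ex(n',C_k^p)$ for the \emph{reduced} order $n'=n-\lfloor\sqrt{\varepsilon}\,n\rfloor$ (available from Theorem \ref{thm-1}). The point is that $\ex(\cdot,C_k^p)$ drops by roughly $(1-\tfrac{1}{p'})\sqrt{\varepsilon}\,n^2$ when $\sqrt{\varepsilon}\,n$ vertices are removed, whereas deleting $L'$ costs at most $\sqrt{\varepsilon}\,n\cdot(1-\tfrac{1}{p'}-10\sqrt{\varepsilon})n$ edges, which is smaller by $10\varepsilon n^2$; this net gain of order $\varepsilon n^2$ beats the $\varepsilon n^2$ slack from Lemma \ref{lem-3-5}, so $e(G-L')>\ex(n',C_k^p)$ and $G-L'$ (hence $G$) contains $C_k^p$. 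This is exactly the benchmark your argument is missing: the degree threshold $(1-\tfrac{1}{p'}-10\sqrt{\varepsilon})n$ is calibrated against the derivative of the Tur\'an number in $n$, not against the average degree of $G$. To repair your write-up you would need to replace the degree-sum comparison with this deletion-and-Tur\'an-number comparison.
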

\begin{proof}
    Suppose to the contrary that $|L|>\sqrt{\varepsilon}n$. Then there exists a subset $L'\subseteq L$ with $|L'|=\lfloor\sqrt{\varepsilon}n\rfloor$. Combining these with Lemma \ref{lem-3-5} that $e(G)\geq e(T_{n,p'}) -\varepsilon n^2$, we get
   \begin{align*}
       e(G-L')&\geq e(G)-\sum_{v\in L'}d(v)\\
       &\geq (1-\frac{1}{p'})\frac{n^2}{2}-\varepsilon n^2-\frac{p'}{8}-\sqrt{\varepsilon}n(1-\frac{1}{p'}-10\sqrt{\varepsilon})n\\
       &\ge (1-\frac{1}{p'})\frac{n'^2}{2}+(1-\frac{1}{p'})n\lfloor\sqrt{\varepsilon}n\rfloor-(1-\frac{1}{p'})\frac{(\sqrt{\varepsilon}n)^2}
       {2}-\sqrt{\varepsilon}(1-\frac{1}{p'}+9\sqrt{\varepsilon})n^2-\frac{p'}{8}\\
       &\geq (1-\frac{1}{p'})\frac{n'^2}{2}+8\varepsilon n^2\\
       &\geq (t-1)(n'-1)+(1-\frac{1}{p'})\frac{(n'-t+1)^2}{2}\\
       &>\ex(n',C_k^p),
   \end{align*}
   where $n'=n-\lfloor\sqrt{\varepsilon} n\rfloor$. Since $e(G-L')>\ex(n-|L'|,C_k^p)$, $G-L'$ contains a $C_k^p$ as a subgraph. This contradicts the fact that $G$ is $C_k^p$-free.
\end{proof}

 Let $W=\bigcup_{i=1}^{p'}W_i$, where $W_i=\{v\in V_i\mid d_{V_i}(v)\geq 2\sqrt{\varepsilon}n\}$. 
\begin{lemma}\label{lem-3-7}
  $|W|\leq \sqrt{\varepsilon}n$
\end{lemma}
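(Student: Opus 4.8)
The plan is to bound $|W|$ by showing that if too many vertices had large internal degree within their own part $V_i$, then either the partition would not be a maximum cut (contradicting our standing assumption from Lemma \ref{lem-3-5}) or we could embed a copy of $C_k^p$. First I would recall the two structural facts already in hand: by Lemma \ref{lem-3-5} we have $\sum_{i=1}^{p'}e(V_i)\le \varepsilon n^2$, and by Lemma \ref{lem-3-6} all but at most $\sqrt{\varepsilon}n$ vertices have degree at least $(1-\tfrac{1}{p'}-10\sqrt{\varepsilon})n$. The key tension is that a vertex $v\in W_i$ sends at least $2\sqrt{\varepsilon}n$ edges \emph{inside} its own part, and a maximum cut forbids this from happening too often.

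\smallskip

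The core of the argument is a max-cut switching estimate. Suppose for contradiction that $|W|>\sqrt{\varepsilon}n$. Since the partition maximizes the number of crossing edges, every vertex $u\in V_i$ must satisfy $d_{V_i}(u)\le d_{V_j}(u)$ for every other part $V_j$; otherwise moving $u$ to $V_j$ would strictly increase the cut. Summing the internal-degree contributions over the vertices of $W$, each such vertex already contributes at least $2\sqrt{\varepsilon}n$ to $\sum_{i}e(V_i)$ counted with the factor $\tfrac12$ for double-counting. Concretely, I would write
\[
\sum_{i=1}^{p'}e(V_i)\;\ge\;\frac{1}{2}\sum_{v\in W}d_{V_i(v)}(v)\;\ge\;\frac{1}{2}\,|W|\cdot 2\sqrt{\varepsilon}n\;=\;|W|\sqrt{\varepsilon}n,
\]
where $V_{i(v)}$ denotes the part containing $v$. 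Combined with $\sum_{i}e(V_i)\le\varepsilon n^2$ from Lemma \ref{lem-3-5}, this forces $|W|\sqrt{\varepsilon}n\le\varepsilon n^2$, i.e. $|W|\le\sqrt{\varepsilon}n$, which is exactly the desired bound and contradicts the assumption $|W|>\sqrt{\varepsilon}n$.

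\smallskip

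The main obstacle I anticipate is bookkeeping the double-counting correctly: an edge with both endpoints in $W\cap V_i$ is counted twice in $\sum_{v\in W}d_{V_{i(v)}}(v)$, while an edge from $W$ to $V_i\setminus W$ is counted once, so the factor $\tfrac12$ must be handled with a little care to ensure the inequality $\sum_i e(V_i)\ge\tfrac12\sum_{v\in W}d_{V_{i(v)}}(v)$ is valid (it is, since $\sum_{v\in V_i}d_{V_i}(v)=2e(V_i)$ and restricting the sum to $v\in W$ only decreases the left side). A secondary point is that the max-cut property is not strictly needed for this counting bound — the pure edge-count from Lemma \ref{lem-3-5} already suffices — so I would present the clean counting argument as the primary proof and note that the maximum-cut hypothesis guarantees the partition is well behaved for the subsequent lemmas rather than for this particular estimate. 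This keeps the proof short and avoids invoking an embedding of $C_k^p$, which would be heavier machinery than required here.
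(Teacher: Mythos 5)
Your core computation is exactly the paper's proof: summing $2e(V_i)=\sum_{v\in V_i}d_{V_i}(v)\ge\sum_{v\in W_i}d_{V_i}(v)\ge 2|W_i|\sqrt{\varepsilon}n$ over $i$ and comparing with the bound $\sum_i e(V_i)\le\varepsilon n^2$ from Lemma \ref{lem-3-5}. The surrounding discussion of max-cut switching and $C_k^p$-embedding is unnecessary, as you yourself observe, and the counting argument is correct as stated.
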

\begin{proof}
    For $1\le i\le p'$, we have 
    $$
    2e(V_i)=\sum_{v\in V_i}d_{V_i}(v)\geq \sum_{v\in W_i}d_{V_i}(v) \geq 2|W_i|\sqrt{\varepsilon}n.
    $$
    Combining this with Lemma \ref{lem-3-5}, we have 
    $$
    \varepsilon n^2\geq \sum_{i=1}^{p'}e(V_i)\geq |W|\sqrt{\varepsilon}n.
    $$
    It follows that $|W|\leq \sqrt{\varepsilon}n$.
\end{proof}

Let $v^*$ be the vertex satisfying $x_{v^*} = \max \{ x_v \mid v \in V(G) \setminus W \}$, and let $u^*$ be the vertex satisfying $x_{u^*} = \max \{ x_v \mid v \in V(G) \}=1$. 
\begin{lemma}\label{lem-3-8}
    $L=\emptyset$.
\end{lemma}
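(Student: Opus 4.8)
The plan is to show that every vertex of low degree can be removed without dropping below the extremal edge count, forcing $L$ to be empty. Specifically, I would argue by contradiction: suppose $L \ne \emptyset$ and pick a vertex $w \in L$, so $d(w) \le (1 - \frac{1}{p'} - 10\sqrt{\varepsilon})n$. The key idea is that the Perron vector component $x_w$ must be comparatively small (since $w$ has few neighbours), and therefore deleting $w$ and re-attaching it as a ``clone'' of the high-weight vertex $u^*$ — adding it to the appropriate part $V_i$ so that it is joined to all other parts — strictly increases the spectral radius while keeping the graph $C_k^p$-free. This is the standard local-switching / Perron-rotation technique used throughout spectral Turán arguments.

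First I would quantify $x_w$ for $w \in L$. Using the eigenvalue equation $\lambda x_w = \sum_{v \sim w} x_v \le d(w) \le (1 - \frac{1}{p'} - 10\sqrt{\varepsilon})n$, together with the lower bound $\lambda = \lambda(G) \ge (1 - \frac{1}{p'})n$ from Lemma~\ref{lem-3-4}, I get $x_w \le \frac{(1 - 1/p' - 10\sqrt{\varepsilon})n}{(1 - 1/p')n}$, which is bounded away from $1$ by roughly $10\sqrt{\varepsilon} p'/(p'-1)$. By contrast, I would want to show $x_{v^*}$ (the maximum weight outside $W$) is close to $1$. Since $G$ differs from $T_{n,p'}$ in at most $\varepsilon n^2$ edges (Lemma~\ref{lem-3-5}) and $|W|, |L|$ are both at most $\sqrt{\varepsilon}n$ (Lemmas~\ref{lem-3-6}, \ref{lem-3-7}), a typical vertex outside $W \cup L$ has degree close to $(1 - \frac{1}{p'})n$ and its neighbours have large weight, which via the eigenvalue equation and averaging forces $x_{v^*} \ge 1 - c\varepsilon$ for some constant. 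The gap $x_{v^*} - x_w \gtrsim \sqrt{\varepsilon}$ is the quantitative engine of the argument.

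Next I would perform the graph modification. Let $w \in L$ and consider duplicating $u^*$: delete all edges at $w$ and instead make $w$ adjacent to exactly the neighbourhood $N(u^*)$ (placing $w$ in the same part as $u^*$, or whichever part keeps the graph $C_k^p$-free). Because $K_{t-1} \vee T_{n-t+1,p'}$ is the conjectured extremal graph and duplicating a vertex of a complete multipartite-type graph does not create a $C_k^p$ — this needs the observation that $C_k^p$ has no two vertices with identical closed neighbourhoods that could be ``split'', or more directly that adding a twin of $u^*$ keeps chromatic/structural obstructions intact — the new graph $G'$ remains $C_k^p$-free. The Rayleigh-quotient change is
\[
\lambda(G') - \lambda(G) \ge \frac{2 x_w\left(\sum_{v \in N(u^*)} x_v - \sum_{v \sim w} x_v\right)}{\bm{x}^T \bm{x}} = \frac{2 x_w\left(\lambda x_{u^*} - \lambda x_w\right)}{\bm{x}^T\bm{x}},
\]
which is strictly positive once $x_{u^*} = 1 > x_w$, contradicting the maximality of $\lambda(G)$. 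Care is needed because $\bm{x}^T\bm{x}$ may also change slightly when reusing the same vertex set, so I would phrase the modification as an edge-swap on the fixed vertex set $V(G)$ and compare $\bm{x}^T A(G')\bm{x}$ against $\bm{x}^T A(G)\bm{x} = \lambda \, \bm{x}^T\bm{x}$ directly, giving $\lambda(G') \ge \lambda(G) + \frac{2x_w(\lambda - \sum_{v\sim w}x_v)}{\bm{x}^T\bm{x}} > \lambda(G)$.

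The main obstacle I anticipate is verifying that the modified graph $G'$ is genuinely $C_k^p$-free. Unlike a bipartite or clique obstruction, a power of a cycle has a rich neighbourhood structure, so I must check that cloning a high-weight vertex cannot complete a forbidden $C_k^p$ — this likely requires a short combinatorial lemma about how copies of $C_k^p$ intersect the near-Turán structure, perhaps showing that any $C_k^p$ in $G'$ using the new edges at $w$ could be rerouted through $u^*$ to produce a $C_k^p$ already in $G$. A secondary technical point is making the weight estimates $x_{v^*} \ge 1 - c\varepsilon$ rigorous while only $G$'s closeness to $T_{n,p'}$ is known; I would handle this by iterating the eigenvalue equation twice (a ``two-step'' bound) so that the small exceptional sets $W \cup L$ contribute negligibly, using $\alpha \gg \varepsilon_1 \gg \varepsilon$ to absorb error terms.
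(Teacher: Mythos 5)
Your overall strategy (a Perron--Rayleigh local switching at a low-degree vertex) is the same as the paper's, and your quantitative part is fine: $\lambda x_{v_0}\le d(v_0)<\lambda$ already gives $x_{v_0}<1=x_{u^*}$, so the Rayleigh quotient increases under any reattachment whose new neighbourhood carries weight $\lambda x_{u^*}$. But the step you yourself flag as ``the main obstacle'' --- that the modified graph $G'$ is still $C_k^p$-free --- is the actual content of this lemma, and your proposed fix does not work as stated. You reattach $w$ as a false twin of $u^*$ and suggest rerouting any copy of $C_k^p$ through $u^*$. This rerouting fails exactly when the copy $H\cong C_k^p$ uses \emph{both} $w$ and $u^*$: since $k\ge 2p+2$, $C_k^p$ has non-adjacent pairs of vertices, so $w$ and $u^*$ (non-adjacent twins in $G'$) can both appear in $H$, and then replacing $w$ by $u^*$ collapses two vertices of $H$ into one. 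Ruling this out amounts to showing $G$ contains no copy of the graph obtained from $C_k^p$ by merging two non-adjacent vertices, which is not automatic and is not addressed. A second problem is that you clone $u^*$, the \emph{global} maximum-weight vertex, which may lie in $W$; then $N(u^*)$ can contain many vertices inside a single part and vertices of low degree, so the neighbours of $w$ in a putative copy $H$ need not have large degree into every part, and no common-neighbour replacement argument is available.

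The paper avoids both issues by a different reattachment: it joins $v_0$ to \emph{all} of $\bigcup_{i\ne j}V_i\setminus(W\cup L)$ rather than to $N(u^*)$. Then every $G'$-neighbour of $v_0$ lies outside $W\cup L$, hence has degree at least $\bigl(\tfrac1{p'}-6\sqrt{\varepsilon}p'\bigr)n$ into $V_j$, so the at most $k-1$ neighbours of $v_0$ in any copy $H$ of $C_k^p$ have a common neighbour $v\in V_j\setminus(W\cup L\cup H)$; substituting $v$ for $v_0$ produces a copy of $C_k^p$ already in $G$, a contradiction. The weight gained, estimated via inequality \eqref{ineq1} against $\sum_{u\sim v_0}x_u\le |W|+(d(v_0)-|W|)x_{v^*}$, is of order $\sqrt{\varepsilon}\,n\,x_{v^*}$, which is all that is needed --- your stronger claim $x_{v^*}\ge 1-c\varepsilon$ is neither established at this stage (the paper only has $x_{v^*}>1-\tfrac{5}{4p'}$ here) nor required. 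To repair your argument you would essentially have to replace the cloning step by this kind of ``attach to a large structured set and reroute via a common neighbour'' construction.
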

\begin{proof}
   Recall that $x_{u^*}=\max\left\{x_v\mid v\in V(G)\right\}=1$, then
    $$
    \lambda(G)=\lambda(G)x_{u^*}< |W|+(n-|W|)x_{v^*}.
    $$
    Combing with Lemma \ref{lem-3-4}, we have 
\begin{equation}\label{ineq2}
    x_{v^*}>\frac{\lambda(G)-|W|}{n-|W|}\ge\frac{\lambda(G)-|W|}{n}\ge 1-\frac{1}{p'}-\sqrt{\varepsilon}>1-\frac{5}{4p'}.      
\end{equation}
On the other hand, we have 
\begin{align*}
    \lambda(G) x_{v^*}&=\sum_{vv^*\in E(G)}x_v=\sum_{v\in N(v^*)\cap W}x_v+\sum_{v\in N(v^*)\setminus W}x_v\\[2mm]
    &\le |W|+(d(v^*)-|W|)x_{v^*}.
\end{align*}
Therefore,
\[\begin{array}{lll}
    d(v^*) &\ge& \lambda(G) +|W|-\frac{|W|}{x_{v^*}}\ge \lambda(G) -\frac{5}{4p'-5}|W|\\[2mm]
    &>&(1-\frac{1}{p'})n-\frac{5\sqrt{\varepsilon}}{4p'-5}n>(1-\frac{1}{p'}-10\sqrt{\varepsilon})n.
\end{array}\]
This yields $v^*\in V(G)\setminus (W \cup L)$. Without loss of generality, we assume that $v^*\in V_1\setminus (W\cup L)$. We have
\begin{align*}
    \lambda(G) x_{v^*}
    &=\sum_{u\in N(v^*)\cap (W\cup L)}x_u+\sum_{u\in N(v^*)\cap (V_1\setminus (W\cup L))}x_u+\sum_{u\in N(v^*)\cap (\cup_{i=2}^{p'}V_i\setminus (W\cup L))}x_u\\
    &<|W|+|L|x_{v^*}+2\sqrt{\varepsilon} nx_{v^*}+\sum_{u\in N(v^*)\cap (\cup_{i=2}^{p'}V_i\setminus (W\cup L))}x_u,
\end{align*}
which implies that 
\begin{equation}\label{ineq1}
    \sum_{u\in N(v^*)\cap (\cup_{i=2}^{p'}V_i\setminus (W\cup L))}x_u\ge (\lambda(G)-|L|-2\sqrt{\varepsilon} n)x_{v^*}-|W|.     
\end{equation}

Suppose to the contrary that there is a vertex $v_0\in L$. Assume that $v_0\in L\cap V_j$ for some $j$. Then $d(v_0)\le (1-\frac{1}{p'}-10\sqrt{\varepsilon})n$. Let $G'$ be the graph defined as
\[G'=G-\left\{uv_0\mid uv_0\in E(G)\right\}+\left\{wv_0\mid w\in\cup_{1\le i\le p',i\ne j} V_i \setminus(W\cup L)\right\}.\] Then $G'$ is $C_k^p$-free. Otherwise, assume that $G'$ contains a subgraph $H$ which is isomorphic to $C_k^p$. Then $v_0\in H$. Let $N_H(v_0)=\{v_1,\cdots,v_a\}$, where $2p\le a\le k-1$. For any vertex $v_i\in N_H(v_0)$, since $v_i\notin L$, we have $d_G(v_i)>(1-\frac{1}{p'}-10\sqrt{\varepsilon})n$. Moreover, since $v_i\notin W$ and $|V_{\ell}|\leq (\frac{n}{p'}+3\sqrt{\varepsilon}n)$ for $1\le \ell\le p'$, we have 
    \[
        d_{V_{j}}(v_i)\geq d_G(v_i)-2\sqrt{\varepsilon}n-(p'-2)(n/p'+3\sqrt{\varepsilon}n)\geq (\frac{1}{p'}-3\sqrt{\varepsilon}p'-6\sqrt{\varepsilon})n,
    \]
and thus
\begin{align*}
&|(\cap_{i\in[a]}N_{V_{j}}(v_i))\setminus(W \cup L\cup H)| \\
&\ge  \sum_{i=1}^{a}d_{V_j}(v_i)-(a-1)|V_j|-|W|-|L|-|H|  \\
&> (1/p'-9a\sqrt{\varepsilon}-3ap'\sqrt{\varepsilon})n-k \\
&>1.
\end{align*}
This means that $v_1, \ldots, v_{a}$ have another common neighbor $v$ in $V_j\setminus H$. Therefore, $(V(H)\cup \{v\})\setminus\{v_0\}$ forms a new $C_k^p$ of $G$, a contradiction.

On the other hand, by (\ref{ineq2}) and (\ref{ineq1}), we have 
\begin{align*}
    \lambda(G')-\lambda(G)
    &\ge\frac{{x}^T (A(G')-A(G)){x}}{{x^T}{x}}=\frac{2x_{v_0}}{{x^T}{x}}\left( \sum_{u\in N(v^*)\cap(\cup_{1\le i\le p', i\ne j}V_i)\setminus (W\cup L)}x_w-\sum_{uv_0\in E(G)}x_u\right)\\
    &\ge \frac{2x_{v_0}}{{x^T}{x}}\left(\left(\left(\lambda(G)-|L|-2\sqrt{\varepsilon}n)x_{v^*}-|W|\right)-(|W|+(d(v_0)-|W|)x_{v^*}\right)\right)\\
    &\ge \frac{2x_{v_0}}{{x^T}{x}}((\lambda(G)-|L|-2\sqrt{\varepsilon}n-d(v_0)+|W|)x_{v^*}-2|W|)\\
    &> \frac{2x_{v_0}}{{x^T}{x}}(7\sqrt{\varepsilon} n(1-\frac{5}{4p'})-2\sqrt{\varepsilon}n)\\
    &\ge\frac{2x_{v_0}}{{x^T}{x}}\left((5-\frac{10}{p'})\sqrt{\varepsilon}n\right)\\
    &>0,
\end{align*}
where the last inequality holds by $p'\geq 2$. This contradicts the maximality of $\lambda(G)$.
\end{proof}

\begin{lemma}\label{clm-3-12}
    For any $w\in W_i$, there is at most one index $j \neq i$ such that $d_{V_j}(w)\leq (\frac{1}{2p'}+\alpha)n$. Moreover, for any $j\ne i$, we have $d_{V_j}(w)>(\frac{1}{2p'}-3\sqrt{\varepsilon}p')n$.
\end{lemma}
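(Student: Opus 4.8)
The plan is to derive the entire statement from three facts already in hand: the \emph{local optimality} of the maximum cut from Lemma~\ref{lem-3-5}, the uniform degree bound coming from $L=\emptyset$ in Lemma~\ref{lem-3-8}, and the part-size estimate $|V_\ell|\le n/p'+3\sqrt{\varepsilon}n$. The pivotal observation is that, since the partition $V(G)=\bigcup_{i=1}^{p'}V_i$ maximises the number of crossing edges, relocating a single vertex $w\in V_i$ to any other part cannot increase that number; comparing the two cuts (the move turns $d_{V_i}(w)$ internal edges into crossing ones and $d_{V_j}(w)$ crossing edges into internal ones) yields the \textbf{local optimality inequality}
\[
d_{V_i}(w)\le d_{V_j}(w)\qquad\text{for every }j\ne i .
\]
Combined with $d(w)>(1-\tfrac1{p'}-10\sqrt{\varepsilon})n$, this controls all cross-neighbourhoods of $w$ at once.

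For the ``moreover'' statement I would fix $j\ne i$, keep the terms $d_{V_i}(w)$ and $d_{V_j}(w)$ in the degree sum, bound $d_{V_i}(w)$ by $d_{V_j}(w)$ via local optimality, and bound each remaining $d_{V_\ell}(w)$ by $|V_\ell|\le n/p'+3\sqrt{\varepsilon}n$. This gives
\[
(1-\tfrac1{p'}-10\sqrt{\varepsilon})n< d(w)\le 2\,d_{V_j}(w)+(p'-2)\bigl(n/p'+3\sqrt{\varepsilon}n\bigr),
\]
so that $2d_{V_j}(w)>\tfrac{n}{p'}-(3p'+4)\sqrt{\varepsilon}n$, i.e. $d_{V_j}(w)>\tfrac{n}{2p'}-\tfrac{3p'+4}{2}\sqrt{\varepsilon}n$. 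Since $p'\ge2$ forces $\tfrac{3p'+4}{2}\le 3p'$, this is at least $(\tfrac1{2p'}-3\sqrt{\varepsilon}p')n$, as claimed.

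For the first statement I would argue by contradiction: suppose two indices $j_1,j_2\ne i$ satisfy $d_{V_{j_1}}(w),d_{V_{j_2}}(w)\le(\tfrac1{2p'}+\alpha)n$. The crux is that one cannot refute this by degrees alone, since a large $d_{V_i}(w)$ could in principle compensate for the two deficient parts — this is precisely where local optimality is indispensable. Invoking $d_{V_i}(w)\le d_{V_{j_1}}(w)\le(\tfrac1{2p'}+\alpha)n$ shows that $V_i$ is \emph{also} deficient, so $w$ has three parts each carrying at most $(\tfrac1{2p'}+\alpha)n$ of its neighbours; bounding the other $p'-3$ parts by $n/p'+3\sqrt{\varepsilon}n$ yields
\[
d(w)\le 3\bigl(\tfrac1{2p'}+\alpha\bigr)n+(p'-3)\bigl(n/p'+3\sqrt{\varepsilon}n\bigr)=\tfrac{2p'-3}{2p'}n+3\alpha n+3(p'-3)\sqrt{\varepsilon}n .
\]
Comparing with $d(w)>\tfrac{2p'-2}{2p'}n-10\sqrt{\varepsilon}n$ forces $\tfrac1{2p'}<3\alpha+(3p'+1)\sqrt{\varepsilon}$, which is false once $\alpha$ is small (say $3\alpha<\tfrac1{4p'}$) and $\varepsilon\ll\alpha$, as then the right-hand side lies below $\tfrac1{2p'}$. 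This contradiction establishes that at most one deficient index exists.

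The only real difficulty here is conceptual rather than computational: one must recognise that the plain degree count is inconclusive and that the maximum-cut inequality $d_{V_i}(w)\le d_{V_j}(w)$ has to be brought in to exclude the compensating case. I note that the argument never uses $w\in W_i$ beyond $w\in V_i$, so the conclusion in fact holds for every vertex of $V_i$; the membership $w\in W_i$ is relevant only to the later applications of the lemma.
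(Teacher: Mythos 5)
Your proposal is correct and follows essentially the same route as the paper: both parts rest on the max-cut local-optimality inequality $d_{V_i}(w)\le d_{V_j}(w)$, the uniform degree bound from $L=\emptyset$, and the part-size estimate $|V_\ell|\le n/p'+3\sqrt{\varepsilon}n$, with the same arithmetic. Your closing observation that only $w\in V_i$ (not $w\in W_i$) is needed is also accurate and consistent with the paper's argument.
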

\begin{proof}
 Conversely, assume that $j_1,j_2\neq i$ such that $d_{V_{j_1}}(w),d_{V_{j_2}}(w)\leq (\frac{1}{2p'}+\alpha)n$. Since $\bigcup_{i=1}^{p'}V_i$ is a maximal cut of $G$, we have $d_{V_i}(w)\leq d_{V_{j_1}}(w)\leq (\frac{1}{2p'}+\alpha)n$. Therefore, 
 \[d(w)\leq 3(\frac{1}{2p'}+\alpha)n+(p'-3)(1/p'+3\sqrt{\varepsilon})n<(1-\frac{1}{p'}-10\sqrt{\varepsilon})n,\] a contradiction.

    For any vertex $w$ in $W_i$, we have $d_{V_i}(w)\leq d_{V_j}(w)$, where $d_{V_j}=\min\{d_{V_{\ell}}(w)\mid \ell\neq i\}$. Then 
    \[
    2d_{V_j}(w)+(p'-2)(\frac{n}{p'}+3\sqrt{\varepsilon}n)\geq d(v)\ge (1-\frac{1}{p'}-10\sqrt{\varepsilon})n,
    \]
    Thus, $d_{V_j}(w_i)>(\frac{1}{2p'}-3\sqrt{\varepsilon}p')n$. 
\end{proof}

\begin{lemma}\label{lem-3-9}
    For any $v\in V(G)\setminus W$, we have $x_v\geq (1-\varepsilon_1)x_{v^*}$.
\end{lemma}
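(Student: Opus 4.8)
The plan is to compare, through the eigenvalue equation, the entry $x_v$ of an arbitrary vertex $v\in V(G)\setminus W$ with the reference entry $x_{v^*}$. Writing $A=N(v^*)\setminus N(v)$ and $B=N(v)\setminus N(v^*)$ and subtracting the identities $\lambda x_{v^*}=\sum_{u\sim v^*}x_u$ and $\lambda x_v=\sum_{u\sim v}x_u$, one obtains
\[
\lambda\,(x_{v^*}-x_v)=\sum_{u\in A}x_u-\sum_{u\in B}x_u .
\]
Since $L=\emptyset$ by Lemma~\ref{lem-3-8}, every vertex has degree exceeding $(1-\tfrac1{p'}-10\sqrt\varepsilon)n$; combined with $|W|\le\sqrt\varepsilon n$ and $\big||V_\ell|-n/p'\big|\le 3\sqrt\varepsilon n$, this lets me control $A$, $B$ and their intersections with each part. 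The behaviour of the right-hand side splits according to whether $v$ lies in the same part as $v^*$, and essentially all the difficulty sits in the cross-part regime.

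First I would record a within-part uniformity estimate: if $u,u'$ lie in a common part $V_\ell$ and $u,u'\notin W$, then $|x_u-x_{u'}|\le c\sqrt\varepsilon$ for an absolute constant $c$. Indeed, $N(u')\setminus N(u)$ consists of at most $d_{V_\ell}(u')<2\sqrt\varepsilon n$ vertices inside $V_\ell$ together with the non-neighbours of $u$ lying outside $V_\ell$; using $d(u)>(1-\tfrac1{p'}-10\sqrt\varepsilon)n$, $d_{V_\ell}(u)<2\sqrt\varepsilon n$ and the part-size bound, the latter set has size $O(\sqrt\varepsilon n)$, so $|N(u')\setminus N(u)|=O(\sqrt\varepsilon n)$ and hence $\lambda\,|x_{u'}-x_u|\le |N(u')\setminus N(u)|=O(\sqrt\varepsilon n)$. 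In particular, if $v$ lies in the part of $v^*$ this already gives $x_{v^*}-x_v\le c\sqrt\varepsilon\le\varepsilon_1 x_{v^*}$, since $x_{v^*}\ge 1-\tfrac{5}{4p'}$ and $\sqrt\varepsilon\ll\varepsilon_1$.

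For the cross-part case, let $v\in V_i$ with $i$ different from the part of $v^*$; by within-part uniformity it suffices to treat a vertex of maximum entry in $V_i$. The key point is that one must \emph{not} discard the term $\sum_{u\in B}x_u$. A short counting argument shows that the non-neighbours of any non-$W$ vertex concentrate in its own part, whence $A$ is essentially contained in $V_i$ and $B$ essentially in the part of $v^*$, with $|A\cap V_i|$ and $|B\setminus V_i|$ differing from $n/p'$ by only $O(\sqrt\varepsilon n)$. Bounding $\sum_{u\in A}x_u\le(\tfrac1{p'}+O(\sqrt\varepsilon))n\,x_v+O(\sqrt\varepsilon n)$ (the vertices of $A\cap V_i$ share $v$'s part, so have entry at most $x_v$) and $\sum_{u\in B}x_u\ge(\tfrac1{p'}-O(\sqrt\varepsilon))n\,(x_{v^*}-c\sqrt\varepsilon)$ (via within-part uniformity in the part of $v^*$), the two $\tfrac1{p'}n$ contributions combine to give
\[
\Big(\lambda+\tfrac1{p'}n\Big)(x_{v^*}-x_v)\le C\sqrt\varepsilon\,n
\]
for an absolute constant $C$. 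Since $\lambda\ge(1-\tfrac1{p'})n$ forces $\lambda+\tfrac1{p'}n\ge n$, this yields $x_{v^*}-x_v\le C\sqrt\varepsilon$, and together with within-part uniformity and $x_{v^*}\ge 1-\tfrac{5}{4p'}$ we conclude $x_v\ge(1-\varepsilon_1)x_{v^*}$ once $\varepsilon$ is small relative to $\varepsilon_1$.

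The main obstacle is precisely this cross-part estimate: here $|A|$ can be as large as $\approx n/p'$, so the naive bound $\sum_{u\in A}x_u\le|A|$ is hopeless and would only give $x_{v^*}-x_v=O(1)$. The resolution is the observation that $A$ lies almost entirely in $v$'s own part—so its entries are comparable to $x_v$ rather than to $x_{v^*}$—together with the compensating lower bound coming from $B$. Arranging the two $\tfrac1{p'}n$ terms to cancel to leading order, leaving only the $O(\sqrt\varepsilon n)$ error, is the delicate step and relies on the neighbourhood-concentration estimates described above.
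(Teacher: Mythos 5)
Your proposal is correct and follows essentially the same route as the paper: both arguments first handle vertices in the part of $v^*$ via the $O(\sqrt{\varepsilon}n)$ bound on the symmetric difference of neighbourhoods, and then treat the cross-part case by cancelling the two leading $\tfrac{n}{p'}$ contributions (neighbours of $v$ in $v^*$'s part against the neighbours of $v^*$ lying in $v$'s own part). The only cosmetic difference is that you absorb the gain into the factor $\lambda+\tfrac{n}{p'}$ on the left-hand side, whereas the paper bounds both leading terms against $x_{v^*}$ directly; the estimates and conclusion coincide.
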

\begin{proof}
Without loss of generality, assume $ v^* \in V_1 \setminus W $. Thus, $d(v^*)\leq (p'-1)(n/p'+3\sqrt{\varepsilon}n)+2\sqrt{\varepsilon}n<(1-1/p'+3\sqrt{\varepsilon}p')n$.

Let us first consider $ v \in V_1 \setminus W $ with $ v \neq v^* $. Since $L=\emptyset$ and $v,v^*\not\in W$, we get 
\[d_{V\setminus V_1}(v),d_{V\setminus V_1}(v^*)\ge (1-\frac{1}{p'}-12\sqrt{\varepsilon})n.\] This leads to
\begin{align*}
    |N(v) \cap N(v^*)|&\geq |N_{V\setminus V_1}(v) \cap N_{V\setminus V_1}(v^*)|\\
    &\geq d_{V\setminus V_1}(v)+d_{V\setminus V_1}(v^*)-|V\setminus V_1|\\
    &\geq 2(1-\frac{1}{p'}-12\sqrt{\varepsilon})n-\left(n-(\frac{n}{p'}-3\sqrt{\varepsilon}n)\right)\\
    &= (1-1/p'-27\sqrt{\varepsilon})n,
\end{align*}
and 
$$|N(v^*)\setminus (N(v)\cup W)|\leq d(v^*)-|W|-|N(v) \cap N(v^*)|\leq (26\sqrt{\varepsilon}+3\sqrt{\varepsilon}p')n.$$
Therefore, we get
\[\begin{array}{lll}
    \lambda (x_v-x_{v^*})
    &=&\left(\sum_{w\in (N(v)\setminus N(v^*))\cap W}x_w+\sum_{w\in (N(v)\setminus N(v^*))\setminus W}x_w\right)\\[2mm]
    &&-\left(\sum_{w\in (N(v^*)\setminus N(v))\cap W}x_w+\sum_{w\in (N(v^*)\setminus N(v))\setminus W}x_w\right)\\[2mm]
    &\geq&-\left(\sum_{w\in (N(v^*)\setminus N(v))\cap W}x_w+\sum_{w\in (N(v^*)\setminus N(v))\setminus W}x_w\right)\\[2mm]
    &\geq&-\sum_{w\in N(v^*)\cap W}x_w-\sum_{w\in N(v^*)\setminus (N(v)\cup W)}x_w\\[2mm]
    &\ge &-|W|-|N(v^*)\setminus (N(v)\cup W)|\\[2mm]
    &\geq&-\sqrt{\varepsilon}n-(26\sqrt{\varepsilon}+3\sqrt{\varepsilon}p')nx_{v^*}\\[2mm]
    &\geq&-(27+3p')\sqrt{\varepsilon}nx_{v^*}\ge -17p'\sqrt{\varepsilon}nx_{v^*}.
\end{array}\]
This implies that $x_v \geq \left(1-\frac{17{p'}^2}{p'-1}\sqrt{\varepsilon}\right)x_{v^*}\geq (1-\varepsilon_1)n$.

Next, we estimate the eigenvector entries for other vertices. Without loss of generality, consider $ v \in V_2 \setminus W $. For $i\neq 2$, we have
\begin{align*}
    d_{V_i}(v)\geq d(v)-d_{V_2}(v)-(p'-2)(n/p'+3\sqrt{\varepsilon}n)\geq (1/p'-6\sqrt{\varepsilon}p')n.
\end{align*}
and $ d_{V_i\setminus W}(v)\geq (1/p'-7\sqrt{\varepsilon}p')n.$
Note that 
\[\sum_{w\in N_{V_1}(v)\setminus W}x_w\geq d_{V_1\setminus W}(v)\left(1-\frac{17{p'}^2}{p'-1}\sqrt{\varepsilon}\right)x_{v^*} \geq (\frac{1}{p'}-24\sqrt{\varepsilon}p')nx_{v^*}.\]
Similarly, noticing that $3x_{v^*}>1$ and $p'\ge 2$, we have
\[\begin{array}{lll}
    \lambda (x_v-x_{v^*})&\ge &\sum_{w\in N_{V_1}(v)\setminus W}x_w-\sum_{w\in N(v^*)\cap W}x_w-\sum_{w\in N(v^*)\setminus (N(v)\cup W)}x_w\\[2mm]
    &\geq& (\frac{1}{p'}-24\sqrt{\varepsilon}p')nx_{v^*}-\sqrt{\varepsilon}n-\sum_{w\in N(v^*)\setminus (N(v)\cup W)}x_w\\[2mm]
    &\geq& (\frac{1}{p'}-24\sqrt{\varepsilon}p')nx_{v^*}-\sqrt{\varepsilon}n-\sum_{w\in N_{V_1}(v^*)\setminus (N_{V_1}(v)\cup W)}x_w\\[2mm]
    &&-\sum_{w\in N_{V_2}(v^*)\setminus (N_{V_2}(v)\cup W)}x_w-\sum_{i=3}^{p'}\sum_{w\in N_{V_i}(v^*)\setminus (N_{V_i}(v)\cup W)}x_w\\[2mm]
    &\ge &(\frac{1}{p'}-24\sqrt{\varepsilon}p')nx_{v^*}-3\sqrt{\varepsilon}nx_{v^*}-2\sqrt{\varepsilon}nx_{v^*}-(1/p'+3\sqrt{\varepsilon})nx_{v^*}\\[2mm]
    &&-(p'-2)\left( n/p'+3\sqrt{\varepsilon}n-(n/p'-6\sqrt{\varepsilon}p'n)\right)x_{v^*}\\[2mm]
    &\ge &-8({p'}^2+p'+1)\sqrt{\varepsilon}nx_{v^*}\ge -16{p'}^2\sqrt{\varepsilon}nx_{v^*}
\end{array}\]
It follows that $x_u\geq (1-\varepsilon_1)x_{v^*}$.

In summary, we have shown that for any $ v \in V(G) \setminus W $, $ x_v \geq (1 - \varepsilon_1)x_{v^*} $. 
\end{proof}

\begin{lemma}\label{lem-3-10}
    $\sum_{i=1}^{p'} \nu(G[V_i \setminus W])\leq t-1$ and $\sum_{i=1}^{p'} e(G[V_i \setminus W])\leq 4(t-1)\sqrt{\varepsilon}n$. 
\end{lemma}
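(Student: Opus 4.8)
The plan is to deduce the edge bound from the matching bound, and to prove the matching bound $\sum_{i=1}^{p'}\nu(G[V_i\setminus W])\le t-1$ by contradiction via embedding. For the reduction, recall that every $v\in V_i\setminus W$ has $d_{V_i}(v)<2\sqrt{\varepsilon}n$ by the definition of $W_i$, so $\Delta(G[V_i\setminus W])<2\sqrt{\varepsilon}n$. Since in any graph $H$ every edge is incident to a vertex saturated by a maximum matching, we have $e(H)\le 2\nu(H)\Delta(H)$; applied to each $G[V_i\setminus W]$ this gives $e(G[V_i\setminus W])< 4\sqrt{\varepsilon}n\,\nu(G[V_i\setminus W])$, whence $\sum_i e(G[V_i\setminus W])< 4\sqrt{\varepsilon}n\sum_i\nu(G[V_i\setminus W])\le 4(t-1)\sqrt{\varepsilon}n$ once the first bound is established. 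So it suffices to prove $\sum_i\nu(G[V_i\setminus W])\le t-1$.

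Suppose to the contrary that $\sum_{i=1}^{p'}\nu(G[V_i\setminus W])\ge t$. Fix a maximum matching $M_i$ in each $G[V_i\setminus W]$ and pick integers $0\le b_i\le|M_i|$ with $\sum_i b_i=t$ (possible since $\sum_i|M_i|\ge t$). The whole problem then reduces to a combinatorial claim about $C_k^p$ itself: \emph{for every such $(b_1,\dots,b_{p'})$ there is a partition $V(C_k^p)=A_1\sqcup\cdots\sqcup A_{p'}$ such that $C_k^p[A_i]$ is a matching with exactly $b_i$ edges, and the $t$ edges $\bigcup_i E(C_k^p[A_i])$ form an induced matching of $C_k^p$.} I would prove this by an explicit colouring. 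Identify $V(C_k^p)$ with $\mathbb{Z}_k$, $k=sp'+t$, adjacency being cyclic distance $\le p$, and recall $t\le p<p'$ and (crucially) $t\ne s$, i.e. $s\ge t+1$. Order the target classes as $c_1\le\cdots\le c_t$ (class $i$ listed $b_i$ times), put $j_l=(l-1)(p'+1)+c_l$, and colour $v_j$ by $\big(j-\#\{l:j_l<j\}\big)\bmod p'$. A direct check shows the only monochromatic adjacent pairs are the defect pairs $\{v_{j_l},v_{j_l+1}\}$, that $v_{j_l},v_{j_l+1}$ both receive colour $c_l$, and that consecutive defects are at index-distance $\ge p'+1$ while the wrap-around gap equals $(s-t+1)p'+1-(c_t-c_1)\ge p'+2$ precisely because $s>t$; since $p'>p$ this forces all $t$ defect edges to be pairwise at distance $>p$, i.e. an induced matching, and makes the colouring proper away from them.

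Given this partition, I would embed $C_k^p$ into $G$ as follows. Map $A_i$ into $V_i\setminus W$. First place the $2t$ matched vertices onto the endpoints of $b_i$ edges of $M_i$ in each part, respecting the defect pairing; because the defect edges form an \emph{induced} matching, the only adjacencies among matched vertices are these $t$ edges, which are all realised, and the choice decouples completely across the parts $V_i$. Then embed the remaining vertices, each of which is isolated in its class and hence has all its $C_k^p$-neighbours in other parts, one at a time in cyclic order. Every already-placed neighbour lies in $V\setminus W$, so (using $L=\emptyset$, $|V_i|\le(1/p'+3\sqrt{\varepsilon})n$ and the degree estimates of Lemma~\ref{clm-3-12}) it has at least $(1/p'-O(\sqrt{\varepsilon}))n$ neighbours in each other part; with at most $2p$ such constraints, the set of admissible images in the designated part has size at least $(1/p'-O(\sqrt{\varepsilon}))n-|W|-k>0$, so a greedy choice avoiding used vertices succeeds, exactly as in the proof of Lemma~\ref{lem-3-8}. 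This produces a copy of $C_k^p$ in $G$, contradicting $C_k^p$-freeness.

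The main obstacle is the combinatorial claim: realising an \emph{arbitrary} distribution $(b_i)$ of the $t$ colour-critical edges as an \emph{induced} matching of $C_k^p$. Everything else is routine — the edge bound is a one-line consequence of the matching bound, and the greedy cross-embedding only needs the size and degree estimates already in hand. It is exactly at this claim that the hypothesis $t\ne s$ enters in an essential way: the extra block guaranteed by $s\ge t+1$ is what keeps the first and last defects far apart across the wrap-around, so that the defects are mutually non-adjacent and the matched part of the embedding imposes no cross-part constraints. I expect the bookkeeping for the spacing and properness of the colouring (and the verification that one can always choose $b_i\le|M_i|$ with $\sum b_i=t$) to be the only genuinely delicate points.
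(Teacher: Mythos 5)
Your proposal is correct and follows essentially the same route as the paper: the edge bound is deduced from the matching bound via $\Delta(G[V_i\setminus W])<2\sqrt{\varepsilon}n$ exactly as in the text, and the matching bound is proved by placing the $j$th matching edge at positions $(j-1)(p'+1)+i_j$ and $(j-1)(p'+1)+i_j+1$ of $C_k^p$ and completing the embedding greedily, which is precisely the paper's construction of the vertices $u_\beta$ and the map $\overline{\beta}$. Your explicit check that the $t$ defect pairs form an \emph{induced} matching of $C_k^p$ --- in particular the wrap-around gap computation, which is exactly where the hypothesis $t\ne s$ is needed --- is a point the paper leaves implicit, so this is a welcome clarification rather than a deviation.
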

\begin{proof}
First, we consider $\sum_{i=1}^{p'} \nu(G[V_i \setminus W])$.  Suppose to the contrary that $\sum_{i=1}^{p'} \nu(G[V_i \setminus W])\geq t$. We will obtain a contradiction by finding $k$ vertices $\{u_1,u_2,\ldots,u_k\}$ that form a $C_k^p$. Let $M=\{v^{(1)}_{i_1,1}v^{(1)}_{i_1,2},v^{(2)}_{i_2,1}v^{(2)}_{i_2,2},\ldots,v^{(t)}_{i_t,1}v^{(t)}_{i_t,2}\}$ be a matching of $\bigcup_{i=1}^{p'} G[V_i\setminus W]$ with $v^{(j)}_{i_j,1},v^{(j)}_{i_j,2}\in V_{i_j}$. Without loss of generality, assume $i_1=1$ and $i_1\le i_2\le \cdots\le i_t$. For $1\le j\le t$, let 
  \[u_{(p'+1)(j-1)+i_j}=v^{(j)}_{i_j,1}\text{ and }u_{(p'+1)(j-1)+i_j+1}=v^{(j)}_{i_j,2}.\]
 In detail, we get $u_1=v^{(1)}_{i_1,1}$, $u_2=v^{(1)}_{i_1,2}$, $u_{(p'+1)+i_2}=v^{(2)}_{i_2,1}$, $u_{(p'+1)+i_2+1}=v^{(2)}_{i_2,2}$, and so on. For any $3\le \beta\le k$, let $K_{\beta}^-$ be the set of vertices $u_x$ that are already defined with $\beta-p\le x\le \beta-1\pmod k$, and let $K_{\beta}^+$ be the set of vertices $u_x$ that are already defined with $\beta+1\le x\le \beta+p\pmod k$. Denote by $K_{\beta}=K_{\beta}^-\cup K_{\beta}^+$. Moreover, we define $\overline{\beta}$ as follows: 
 \begin{itemize}
 \item If $3\le \beta\le t(p'+1)$, then there exist unique $\ell\ge 0$ and $1\le h\le p'+1$ such that $\beta=\ell(p'+1)+h$. We define $\overline{\beta}=h$ if $1\le h\le i_{\ell+1}$, and $\overline{\beta}=h-1$ if $i_{\ell+1}+1\le h\le p'+1$.
 \item If $t(p'+1)+1\le \beta\le k$, then there exist unique $\ell\ge 0$ and $1\le h\le p'$ such that $\beta-t(p'+1)=\ell p'+h$. We define $\overline{\beta}=h$.
 \end{itemize}
 
 We claim that, if $u_{\beta}$ is defined, then $u_{\beta}\in V_{\overline{\beta}}$. First, the vertices $u_{(j-1)(p'+1)+i_j}$ and $u_{(j-1)(p'+1)+i_j}$ are defined for $1\le j\le t$. Clearly, they satisfy $u_{\beta}\in V_{\overline{\beta}}$. If $u_{\beta}$ is the vertex that is not defined with the minimum index, from the definition of $\overline{\beta}$, we get $v\not\in V_{\overline{\beta}}$ for any $v\in K_{\beta}$. Therefore, for any $v\in  K_{\beta}$, we have
\begin{equation}\label{ineq3}
d_{V_{\overline{\beta}}}(v) \geq d(v)-2\sqrt{\varepsilon}n-(p'-2)(n/p'+3\sqrt{\varepsilon}n)\geq
\left( \frac{1}{p'} - 6\sqrt{\varepsilon}p' \right)n > \left( \frac{1}{p'} - \alpha \right)n.
\end{equation}
This yields that
\[\left|\bigcap_{v\in K_{\beta}}N_{V_{\overline{\beta}}}(v)\right|\ge  |K_{\beta}|\left( \frac{1}{p'} - \beta \right)n - (|K_{\beta}| - 1)\left( \frac{1}{p'} + 3\sqrt{\varepsilon} \right)n \geq \alpha n.\]
 Thus, we could choose $u_{\beta}$ to be an arbitrary vertex in $\left(\bigcap_{v\in K_{\beta}}N_{V_{\overline{\beta}}}(v)\right)\setminus W$, and $u_{\beta}\in V_{\overline{\beta}}$. Hence, the desired vertices $\{u_1,\ldots,u_k\}$ are obtained by repeating these processes.

Next, consider $\sum_{i=1}^{p'} e(G[V_i \setminus W])$. Let $M_i'$ be maximum matchings of $G[V_i\setminus W]$ for $1\le i\le p'$. Therefore, any edge in $G[V_i\setminus W]$ must share a common endpoint with an edge in $M_i'$. Note that, $d_{V_i}(v)\le 2\sqrt{\varepsilon}n$. Therefore, we get $e(G[V_i\setminus W])\le 2|M_i'|\cdot (2\sqrt{\varepsilon}n)=4|M_i'|\sqrt{\varepsilon}n$. Thus, $\sum_{i=1}^{p'} e(G[V_i \setminus W])\le 4\sqrt{\varepsilon}n(\sum_{i=1}^{p'}|M_i'|)\le 4(t-1)\sqrt{\varepsilon}n$.
\end{proof}

\begin{lemma}\label{lem-3-11}
    $t-1\leq|W|\leq p'(p'-1)t$.
\end{lemma}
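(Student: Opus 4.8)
For the lower bound $|W|\ge t-1$ I would use spectral monotonicity rather than any delicate eigenvector estimate. Suppose $|W|\le t-2$. By Lemma~\ref{lem-3-10} every internal edge of $G$ that avoids $W$ lies in some $G[V_i\setminus W]$, and there are at most $4(t-1)\sqrt\varepsilon n$ of these; all remaining internal edges meet $W$. Pass to the supergraph $G^{+}\supseteq G$ obtained by making every vertex of $W$ adjacent to all of $V(G)$ and by completing every crossing pair, so that $\lambda(G)\le\lambda(G^{+})$. Then $G^{+}$ is $K_{|W|}\vee(\text{complete }p'\text{-partite on }V\setminus W)$ together with the $O(\sqrt\varepsilon n)$ stray edges inside the parts; the stray edges shift the spectral radius by only $O(\sqrt\varepsilon)$ via Rayleigh, and balancing the $p'$ large parts can only increase $\lambda$, so $\lambda(G)\le\lambda(K_{|W|}\vee T_{n-|W|,p'})+O(\sqrt\varepsilon)$. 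Using the equitable partition into the clique class and the $p'$ Turán classes (Lemma~\ref{lem-3-3}), the quotient matrix shows $\lambda(K_{w}\vee T_{n-w,p'})$ is strictly increasing in $w$ with a gap bounded below by a positive constant of order $(2p'-1)/(p'(p'-1))$. Hence $\lambda(G)\le\lambda(K_{t-2}\vee T_{n-t+2,p'})+O(\sqrt\varepsilon)<\lambda(K_{t-1}\vee T_{n-t+1,p'})\le\lambda(G)$ by Lemma~\ref{lem-3-4}, a contradiction, so $|W|\ge t-1$.

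For the upper bound I would exhibit a forbidden $C_k^p$ once $|W|$ is too large, reusing the block-embedding engine of Lemma~\ref{lem-3-10}. Assume $|W|>p'(p'-1)t$. Partition $W$ by the pair (home part $i$, unique deficient part $j\neq i$ provided by Lemma~\ref{clm-3-12}); this gives at most $p'(p'-1)$ classes, so some class contains at least $t+1$ vertices of a common part $V_{i_0}$. Each such $w$ has $d_{V_{i_0}}(w)\ge 2\sqrt\varepsilon n$ and $|W|\le\sqrt\varepsilon n$, hence at least $\sqrt\varepsilon n$ neighbours in $V_{i_0}\setminus W$, so I can greedily pick $t$ disjoint edges $w_\ell v_\ell$ with $w_\ell\in W_{i_0}$ and $v_\ell\in V_{i_0}\setminus W$. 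Running the construction of Lemma~\ref{lem-3-10} with these $t$ edges as the doubled pairs and each connector chosen in the prescribed part $V_{\overline\beta}\setminus W$, the one genuinely new feature is that a window $K_\beta$ can now contain $W$-vertices, whose degree into $V_{\overline\beta}$ is only about $n/(2p')$ instead of $n/p'$.

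The heart of the argument, and the step I expect to be the main obstacle, is keeping $\bigl|\bigcap_{v\in K_\beta}N_{V_{\overline\beta}}(v)\bigr|\ge\alpha n$. Since consecutive doubled pairs are $p'+1\ge p+2$ positions apart, each window meets at most two of the $w_\ell$, and the common-neighbourhood bound of Lemma~\ref{lem-3-10} degrades by about $n/(2p')$ for each $W$-vertex in the window: one $W$-vertex is absorbed by the slack $(\tfrac1{2p'}+\alpha)n$ that Lemma~\ref{clm-3-12} guarantees on its good parts, but two $W$-vertices both deficient for the same part $V_{\overline\beta}$ would push the intersection down to $O(\sqrt\varepsilon n)$ or below. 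I would therefore have to arrange the construction — the choice of the doubled part $i_0$, the order of the blocks, and the placement of the deficient part $j_0$ inside each block — so that no window ever contains two vertices deficient for its connector's part; the pigeonhole factor $p'(p'-1)$ and the fact (Lemma~\ref{clm-3-12}) that each $w$ is deficient for at most one part are precisely what should make a conflict-free arrangement possible. Once this bookkeeping is secured, the embedding produces a $C_k^p\subseteq G$, contradicting $C_k^p$-freeness and giving $|W|\le p'(p'-1)t$.

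In short, the lower bound reduces to a clean graph-inclusion plus the strict monotonicity of $\lambda(K_w\vee T_{n-w,p'})$ in $w$, while the real work is the upper bound, where the constant $p'(p'-1)t$ is tuned exactly so that the deficient parts of the chosen $W$-vertices can be kept out of the way of the embedding's windows.
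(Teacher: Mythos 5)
Your upper-bound argument is essentially the paper's: pigeonhole $W$ into at most $p'(p'-1)$ classes by the pair (home part, unique deficient part), extract $t$ vertices from one class, say $W_{1,2}\subseteq V_1$ with common deficient part $V_2$, match them into $V_1\setminus W$, and rerun the embedding of Lemma~\ref{lem-3-10}. The conflict you single out as the main obstacle --- a window containing two vertices that are both deficient for its connector's part --- is in fact vacuous for the chosen class: all $t$ selected vertices share the \emph{same} unique deficient part $V_2$, so by Lemma~\ref{clm-3-12} each satisfies $d_{V_j}(w)>(\tfrac{1}{2p'}+\alpha)n$ for every $j\ge 3$, and two such vertices in a window aimed at $V_j$ already leave a surplus of $(2\alpha-O(\sqrt{\varepsilon}))n$ over $|V_j|$ (provided you keep the sharp error $6\sqrt{\varepsilon}p'$ rather than $\alpha$ for the non-$W$ vertices); only the $V_2$-connector needs the positional observation that its window meets at most one of the doubled pairs. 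This is precisely the paper's bookkeeping, except that for $j\ge 3$ the paper instead derives $d_{V_j}(w)\ge(\tfrac{2}{3p'}-O(\sqrt{\varepsilon}))n$ from $d_{V_1}(w)\le d_{V_2}(w)\le d_{V_j}(w)$; either estimate closes the case, so this half of your proposal is sound.

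The lower bound is where you genuinely depart from the paper, and the pivotal step fails as written. The assertion that the stray edges ``shift the spectral radius by only $O(\sqrt{\varepsilon})$'' is false as a statement about graphs: adding $4(t-1)\sqrt{\varepsilon}\,n$ edges can raise $\lambda$ by as much as $\sqrt{8(t-1)\sqrt{\varepsilon}\,n}=\Theta(\varepsilon^{1/4}\sqrt{n})$ (e.g.\ if they concentrate near one vertex), which for fixed $\varepsilon$ and large $n$ swamps the constant gap between $\lambda(K_{t-2}\vee T_{n-t+2,p'})$ and $\lambda(K_{t-1}\vee T_{n-t+1,p'})$ on which your contradiction rests. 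The inequality you need is only valid in the Rayleigh-quotient sense with a test vector of squared norm $\Omega(n)$: writing $A(G)=A_0+E$ with $A_0$ dominated entrywise by $A(K_{|W|}\vee K_{n_1,\dots,n_{p'}})$ and $x$ the Perron vector of $G$, one gets $\lambda(G)\le\lambda(K_{|W|}\vee K_{n_1,\dots,n_{p'}})+2e(E)/x^Tx$, and $x^Tx\ge cn$ must be imported from Lemma~\ref{lem-3-9} and \eqref{ineq2}. Even after that repair you still owe proofs that balancing the parts and enlarging the clique each increase $\lambda$ by the amounts you claim; neither is established in the paper (the balancing step is proved there only for the final extremal graph). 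The paper sidesteps all of this with a single local perturbation: delete the at most $4(t-1)\sqrt{\varepsilon}\,n$ internal edges avoiding $W$, join one vertex $v'\in V_1\setminus W$ to everything, note the result embeds in $K_{t-1}\vee H$ because $|W\cup\{v'\}|\le t-1$, and compare Rayleigh quotients with $G$'s own Perron vector, where the gain of order $(n/p')x_{v^*}^2$ beats the loss $O(\sqrt{\varepsilon}n)$ outright. Your route can be completed, but not without the norm lower bound and the two auxiliary monotonicity lemmas you currently take for granted.
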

\begin{proof}
  First, we show $t-1\leq|W|$. Suppose to the contrary that $|W| <t-1$. Select a vertex $v' \in V_1\setminus W$. Define the graph $G'$ as
\[
G' = G - \{ uw \mid uw \in \bigcup_{i=1}^{p'}E( G[V_i \setminus W])\} + \left\{ v'w \mid w \in V(G) \right\}.
\]
Clearly, $G'$ is a subgraph of $K_{t-1} \vee H$, where $H$ is a complete $p'$-partite graph on $n-t+1$ vertices. Consequently, $G'$ does not contain $C_k^p$ as a subgraph. By Lemmas \ref{lem-3-9}, \ref{lem-3-10} and \eqref{ineq2}, we have 
\begin{align*}
    \lambda(G')-\lambda(G)&\geq \frac{2}{xx^T}\left(\sum_{uv\in E(G')}x_ux_v-\sum_{uv\in E(G)}x_ux_v\right)\\
    &\geq \frac{2}{xx^T}\left(\sum_{u\notin N_G(v')}x_ux_{v'}-\sum_{uv\in E(\bigcup_{i=1}^{p'} G[V_i \setminus W])}x_ux_v\right)\\
    &\geq \frac{2}{xx^T}\left(\sum_{u\notin (N_{G}(v')\cap V_1)\setminus W}x_ux_{v'}-\sum_{uv\in \bigcup_{i=1}^{p'} E(G[V_i \setminus W])}x_ux_v\right)\\
    &\geq \frac{2}{xx^T}\left((1/p'-6\sqrt{\varepsilon})(1-\varepsilon_1)^2nx_{v^*}^2-4(t-1)\sqrt{\varepsilon}n\right)\\
    &\geq \frac{2n}{xx^T}\left((1/p'-6\sqrt{\varepsilon})(1-\varepsilon_1)^2(1-\frac{5}{4p'})^2-4(t-1)\sqrt{\varepsilon}\right)\\
    &>0,
\end{align*}
a contradiction. 

Next, we prove that $|W| \leq p'(p'-1)t$. Suppose to the contrary that $|W| > p'(p'-1)t$. By the pigeonhole principle, there is a set, say $W_1$, contains at least $(p'-1)t$ vertices. For $w\in W_1$, denote by $i_w$ the number satisfying $2\le i_w\le p'$ and $d_{V_{i_w}}=\min\{d_{V_j}(w)\mid 2\le j\le p'\}$. Moreover, for $2\le j\le p'$, let $W_{1,j}=\{w\in W_1\mid i_w=j\}$. Since $\sum_{j=2}^{p'}|W_{1,j}|=|W_1|\ge (p'-1)t$, there exists a set, say $W_{1,2}$, containing at least $t$ vertices. Assume $W_1'=\{v^{(1)}_{1,2}, v^{(2)}_{1,2},\ldots, v^{(t)}_{1,2}\}\subseteq W_{1,2}$. Note that $\nu(G[W_1',V_1\setminus W])=t$. Let
 \[M_1=\{v^{(1)}_{1,1}v^{(1)}_{1,2},v^{(2)}_{1,1}v^{(2)}_{1,2},\ldots,v^{(t)}_{1,1}v^{(t)}_{1,2}\}\]  be the matching of size $ t $ in $ G[W_1',V_1\setminus W]$. By the same method as that in the proof of Lemma \ref{lem-3-10}, we could also define the vertices $\{u_1,u_2,\ldots,u_k\}$ that form a $C_k^p$, which leads to the contradiction. In detail, adopting the setting and notation from the proof of Lemma \ref{lem-3-10}, we claim that, if $u_{\beta}$ is defined, then $u_{\beta}\in V_{\overline{\beta}}$. First, $u_{(j-1)(p'+1)+1}$ and $u_{(j-1)(p'+1)+1}$ are defined for $1\le j\le t$, and they satisfy $u_{\beta}\in V_{\overline{\beta}}$. If $u_{\beta}$ is a vertex not defined with the minimum index, from the definition of $\overline{\beta}$, we get $\overline{\beta}\ge 2$ and $v\not\in V_{\overline{\beta}}$ for any $v\in K_{\beta}$. Furthermore, $|K_{\beta}\cap W_{1,2}|\le 2$ if $\overline{\beta}\ge 3$, and $|K_{\beta}\cap W_{1,2}|\le 1$ if $\overline{\beta}=2$. For any $v\in K_{\beta}\setminus W_{1,2}$, we still get \eqref{ineq3}. For any $w\in  K_{\beta}\cap W_{1,2}$, let  $s=\min\{ d_{V_j}(w)\}$ for $3\leq j\leq p'$. Then, we have $d_{V_1}(w)\leq d_{V_2}(w)\leq s$ and
\[3s+(p'-3)(\frac{n}{p'}+3\sqrt{\varepsilon}n)\geq d_{V_1}(w)+d_{V_2}(w)+s+(p'-3)(\frac{n}{p'}+3\sqrt{\varepsilon}n)\geq d(w)\ge (1-\frac{1}{p'}-10\sqrt{\varepsilon})n.\]
This yields that, for $3\leq j\leq p'$
\begin{equation}\label{ineq4}
    d_{V_j}(w)\geq \left(\frac{2}{3p'}-\frac{4\sqrt{\varepsilon}}{3}p'\right)n.
\end{equation}
Hence, if $\overline{\beta}=2$, from Lemma \ref{clm-3-12} and \eqref{ineq3}, we get
\[\left|\bigcap_{v\in K_{\beta}}N_{V_{\overline{\beta}}}(v)\right|\ge  \left(\frac{1}{2p'}-3\sqrt{\varepsilon}p'\right)n+(|K_{\beta}|-1)\left( \frac{1}{p'} - \alpha \right)n-(|K_{\beta}|-1)\left(\frac{1}{p'}+3\sqrt{\varepsilon}\right)n>\alpha n;\]
if $\overline{\beta}\ge 3$, from  Lemma \ref{clm-3-12} and \eqref{ineq4}, we get
\[\left|\bigcap_{v\in K_{\beta}}N_{V_{\overline{\beta}}}(v)\right|\ge 2\left(\frac{2}{3p'}-\frac{4\sqrt{\varepsilon}}{3}p'\right)n+(|K_{\beta}|-2)\left( \frac{1}{p'} - \alpha \right)n-(|K_{\beta}|-1)\left(\frac{1}{p'}+3\sqrt{\varepsilon}\right)n>\alpha n.\]
Therefore, we could always choose $u_{\beta}$ to be an arbitrary vertex in $\left(\bigcap_{v\in K_{\beta}}N_{V_{\overline{\beta}}}(v)\right)\setminus W$. By repeating these processes, we get the desired vertices $\{u_1,\ldots,u_k\}$.\end{proof}

Let $W_i^* = \{ v \in W_i \mid d_{V_j}(v) \geq \left( \frac{1}{2p'} + \alpha \right) n \text{ for any }j\ne i \}$, and $W'_i=W_i\setminus W_i^*$. Denote by $W^* = \bigcup_{i=1}^{p'} W_i^*$ and $W'=\bigcup_{i=1}^{p'} W'_i$.

\begin{lemma}\label{lem-3-13}
    $|W^*|=|W|=t-1$. 
\end{lemma}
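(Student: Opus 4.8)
The plan is to combine the lower bound $t-1\le |W|$ from Lemma \ref{lem-3-11} with two upper-type facts: that $|W|\le t-1$, and that every heavy vertex is in fact a clean apex, i.e.\ $W'=\emptyset$. Once $W'=\emptyset$ we have $W=W^*$, so $|W^*|=|W|$, and squeezing $t-1\le |W|=|W^*|\le t-1$ finishes the lemma. Throughout I will use that $L=\emptyset$ (Lemma \ref{lem-3-8}), so every vertex has degree at least $(1-\tfrac1{p'}-10\sqrt\varepsilon)n$; that $x_v\ge(1-\varepsilon_1)x_{v^*}$ for $v\notin W$ (Lemma \ref{lem-3-9}); and that $\sum_i e(G[V_i\setminus W])\le 4(t-1)\sqrt\varepsilon n$ together with $|W|\le p'(p'-1)t$ (Lemmas \ref{lem-3-10} and \ref{lem-3-11}).

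For the bound $|W|\le t-1$, suppose $|W|\ge t$ and fix any $t$ vertices $w_1,\dots,w_t\in W$. Since each lies in some $W_i$ with $d_{V_i}(w_s)\ge 2\sqrt\varepsilon n$ while $|W|$ is bounded, I can greedily pick distinct partners $a_s\in V_{i_s}\setminus W$ with $w_sa_s\in E(G)$, producing $t$ vertex-disjoint monochromatic edges. These play the role of the critical matching $M$ in the proof of Lemma \ref{lem-3-10}: I run exactly that embedding, reading off the filler vertices $u_\beta$ from the common neighbourhoods $\bigcap_{v\in K_\beta}N_{V_{\overline\beta}}(v)\setminus W$. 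The only new feature is that each $w_s$ may be a $W'$-vertex with a single deficient direction (unique by Lemma \ref{clm-3-12}); I order the $t$ edges cyclically so that in every window $K_\beta$ at most one constraint vertex is deficient toward $V_{\overline\beta}$, and then the estimates \eqref{ineq3} and \eqref{ineq4} together with Lemma \ref{clm-3-12} still force $|\bigcap_{v\in K_\beta}N_{V_{\overline\beta}}(v)|\ge\alpha n$, exactly as in Lemma \ref{lem-3-11}. This produces a $C_k^p\subseteq G$, a contradiction, so $|W|=t-1$.

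With $|W|=t-1$ in hand I show $W'=\emptyset$ by a single edge-switch. Suppose $w\in W'_i$ has deficient direction $j$; then $w$ has at least $(\tfrac1{2p'}-2\alpha)n$ non-neighbours in $V_j$. Form $G'$ from $G$ by deleting every edge lying inside some $V_\ell\setminus W$ (at most $4(t-1)\sqrt\varepsilon n$ edges, by Lemma \ref{lem-3-10}) and adding all missing edges at $w$. Then $G'-W$ is $p'$-partite, so $G'\subseteq K_{t-1}\vee(\text{complete }p'\text{-partite})$; since $C_k^p$ is color-$t$-critical with $\chi(C_k^p)=p'+1$, no such graph contains $C_k^p$ (cf.\ Lemmas \ref{lem-2} and \ref{lem-1}), hence $G'$ is $C_k^p$-free. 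Using $x_v\ge(1-\varepsilon_1)x_{v^*}$ (Lemma \ref{lem-3-9}), $x_{v^*}>1-\tfrac5{4p'}$ from \eqref{ineq2}, and the bound $x_w\ge\tfrac1\lambda(d(w)-|W|)(1-\varepsilon_1)x_{v^*}\ge c\,x_{v^*}$ coming from $d(w)\ge(1-\tfrac1{p'}-10\sqrt\varepsilon)n$, the Rayleigh quotient gives $\lambda(G')-\lambda(G)\ge\frac{2}{x^Tx}\big(x_w(\tfrac1{2p'}-2\alpha)(1-\varepsilon_1)n\,x_{v^*}-4(t-1)\sqrt\varepsilon n\,x_{v^*}^2\big)>0$, because $\tfrac1{2p'}-2\alpha\gg\sqrt\varepsilon$. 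This contradicts the maximality of $\lambda(G)$, so no $W'$-vertex exists.

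Combining the two steps, $W=W^*$ and $t-1\le|W|=|W^*|\le t-1$, whence $|W^*|=|W|=t-1$. I expect the genuine obstacle to be the embedding in the second paragraph: unlike in Lemma \ref{lem-3-11}, the $t$ critical edges may have endpoints scattered across several parts and carry several distinct deficient directions, so the bookkeeping that keeps at most one deficient constraint per window, and hence every intersection above $\alpha n$, must be set up carefully. The switch establishing $W'=\emptyset$ is by comparison routine once $|W|=t-1$ is known, since the target class $K_{t-1}\vee(\text{complete }p'\text{-partite})$ is automatically $C_k^p$-free and a deficient vertex always has $\Theta(n)$ edges to gain against only $O(\sqrt\varepsilon n)$ edges to lose.
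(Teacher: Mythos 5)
Your skeleton (the lower bound $t-1\le|W|$ from Lemma \ref{lem-3-11}, then $|W|\le t-1$, then $W'=\emptyset$) matches the paper, and your final switching step is essentially sound: once $|W|=t-1$ is known, $G'-W$ is $p'$-partite, so $G'\subseteq K_{t-1}\vee(\text{complete }p'\text{-partite})$ is $C_k^p$-free, and the gain of order $\frac{n}{2p'}x_wx_{v^*}$ beats the $4(t-1)\sqrt{\varepsilon}n$ deleted monochromatic edges. The genuine gap is exactly where you suspected it: the embedding proving $|W|\le t-1$. Your matching $M$ may consist of $t$ edges \emph{all} of whose $W$-endpoints lie in $W'$, and a $W'$-vertex is only guaranteed $d_{V_j}(w)>(\frac{1}{2p'}-3\sqrt{\varepsilon}p')n$ in its deficient direction. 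If a window $K_\beta$ with $\overline{\beta}=j$ contains two constraint vertices both deficient toward $V_j$, the inclusion--exclusion count becomes $2(\frac{1}{2p'}-3\sqrt{\varepsilon}p')n+(|K_\beta|-2)(\frac{1}{p'}-6\sqrt{\varepsilon}p')n-(|K_\beta|-1)(\frac{1}{p'}+3\sqrt{\varepsilon})n=-O(\sqrt{\varepsilon})n$, and the embedding dies. Reordering the $t$ edges cyclically cannot repair this, because the obstruction does not depend on the order: take $m=0$ and $p\ge 2$, and suppose all $t$ vertices of $W$ lie in $W_1'$ with the same deficient direction $V_2$. Every ordering places the $W$-endpoints at positions congruent to $1\pmod{p'+1}$, and the $V_2$-slot $h=3$ of each block is within distance $2\le p$ of the $W$-vertex of its own block and within distance $p'-1=p$ of the $W$-vertex of the next block --- two deficient constraints in one window, for every such window. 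One could try to additionally permute the part labels to push the deficient part to a far slot, but you neither propose nor verify such a scheme, and for general configurations (several parts $i_s$, several deficient directions) it is not clear one exists.

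The paper sidesteps this with a two-case argument that is the real content of the lemma. Assuming $|W|\ge t$, it first shows $|W^*|\le t-2$: the embedding is run with a matching whose $W$-endpoints are $t-1$ vertices of $W^*$ and at most one vertex of $W\setminus W^*$, so each window contains at most one deficient constraint and at least one $W^*$-constraint whose surplus $d_{V_j}\ge(\frac{1}{2p'}+\alpha)n$ (valid for \emph{every} $j\ne i$) absorbs the deficit. In the remaining case $|W^*|\le t-2$ it does not embed at all but performs a global switch: every $w\in W'$ is relocated (delete its edges into $N_{V_{i_w}}(w)$, add its non-edges into its deficient part $V_{j_w}$) and a fresh vertex $v'$ is made dominating, so the new graph lies in $K_{|W^*|+1}\vee(\text{complete }p'\text{-partite})\subseteq K_{t-1}\vee H$; here the per-$w$ change is only $\ge -3\alpha n x_{v^*}$, but the $v'$-gain of order $\frac{1}{p'}nx_{v^*}^2$ dominates because $|W'|\le|W|\le p'(p'-1)t$ is bounded. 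Without either this case split or a worked-out placement scheme controlling the deficient directions, your second paragraph does not go through.
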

\begin{proof}
 Note that $|W|\ge t-1$ due to Lemma \ref{lem-3-11}. It only needs to prove $|W|\le t-1$. Suppose to the contrary that $|W| \geq t$. 
 
 First, we claim that $|W^*| \leq t-2$. Otherwise, there exist one vertex $v^{(1)}_{i_1,1}$ from $W \setminus W^*$ and $t - 1$ vertices $v^{(2)}_{i_2,1},\ldots,v^{(t)}_{i_t,1}$ from $W^*$ satisfying $v^{(j)}_{i_j,1}\in V_{i_j}$ for $1\le j\le t$. Since $d_{V_{i_j}}(v^{(j)}_{i_j,1})\ge 2\sqrt{\varepsilon}n$ and $|W|\le \sqrt{\varepsilon}n$, there exists a matching $M=\{v^{(1)}_{i_1,1}v^{(1)}_{i_1,2},v^{(2)}_{i_2,1}v^{(2)}_{i_2,2},\ldots,v^{(t)}_{i_t,1}v^{(t)}_{i_t,2}\}$ with $v^{(j)}_{i_j,2}\in V_{i_j}\setminus W$. By the same method as that in the proof of Lemma \ref{lem-3-10}, we could also define the vertices $\{u_1,u_2,\ldots,u_k\}$ that form a $C_k^p$, which leads to the contradiction. In detail, adopting the setting and notation from the proof of Lemma \ref{lem-3-10}, we claim that, if $u_{\beta}$ is defined, then $u_{\beta}\in V_{\overline{\beta}}$. If $u_{\beta}$ is the vertex not defined with the minimum index, from the definition, we get $v\not\in V_{\overline{\beta}}$ for any $v\in K_{\beta}$. Moreover, $|K_{\beta}\cap W|\le 2$ and $|K_{\beta}\cap W'|\le 1$. Therefore, from Lemma \ref{clm-3-12} and Eq.\eqref{ineq3}, we get
\[\begin{array}{lll}
\left|\bigcap_{v\in K_{\beta}}N_{V_{\overline{\beta}}}(v)\right|&\ge& \left(\frac{1}{2p'}-3\sqrt{\varepsilon}p'\right)n+\left(\frac{1}{2p'}+\alpha\right)n+(|K_{\beta}|-2)\left(\frac{1}{p'}-6\sqrt{\varepsilon}p'\right)n\\[2mm]
&&-(|K_{\beta}|-1)\left(\frac{1}{p'}+3\sqrt{\varepsilon}\right)n\\[2mm]
 &=&\left(\alpha-3\left((2|K_{\beta}|-3)p'+|K_{\beta}|-1\right)\sqrt{\varepsilon}\right)n>\sqrt{\varepsilon}n.
 \end{array}\]
 Therefore, we could always choose $u_{\beta}$ to be an arbitrary vertex in $\left(\bigcap_{v\in K_{\beta}}N_{V_{\overline{\beta}}}(v)\right)\setminus W$. By repeating these processes, we get the desired vertices $\{u_1,\ldots,u_k\}$.
 
Next, for $w\in W'$, assume that $w\in V_{i_w}$ and that $j_w$ is the unique index satisfying $d_{V_{j_w}}(w)\le \left(\frac{1}{2p'}+\alpha\right)n$. Taking an arbitrary vertex $v'\in V\setminus W$, define the graph $G'$ by setting  
\[G'=G-\bigcup_{i=1}^{p'}E(G[V_i\setminus W_i])-\bigcup_{w\in W'}E(G[w,N_{V_{i_w}}(w)])+\bigcup_{w\in W'}E(G[w,V_{j_w}\setminus N_{V_{j_w}}(w)])+E(G[v',V\setminus N(v')]).\]
Therefore, $V_{\ell}'=(V_{\ell}\setminus W_{\ell})\cup\{w\mid w\in W'\text{ and }i_w=\ell\}$ forms an independent set for $1\le \ell\le p'$. Let $n_{\ell}=|V_{\ell}'|$. Hence, we get
\[G'\subseteq G'[W^*\cup\{v'\}]\vee K_{n_1,\ldots,n_{p'}}\subseteq K_{t-1}\vee H,\] 
where $H$ is a complete $p'$-partite graph on $n-t+1$ vertices. Thus, $G'$ is $C_k^p$-free. In what follows, we shall obtain a contradiction by showing that $\lambda(G')>\lambda(G)$. Note that $d_G(v')\le (2\sqrt{\varepsilon})n+(p'-1)(1/p'+3\sqrt{\varepsilon})n=(1+3\sqrt{\varepsilon}p'-1/p'-\sqrt{\varepsilon})n$. Combining this with Lemmas \ref{lem-3-7} and \ref{lem-3-10}, we get
\begin{equation}\label{eq-5}\begin{array}{lll}
    &&\sum_{u\in V\setminus N_G(v')}x_ux_{v'}-\sum_{uv\in \bigcup_{i=1}^{p'} E(G[V_i \setminus W_i])}x_ux_v\\[2mm]
    &\geq& \sum_{u\in V\setminus (N_G(v')\cup W)}x_ux_{v'}-\sum_{uv\in \bigcup_{i=1}^{p'} E(G[V_i \setminus W_i])}x_ux_v\\[2mm]
    &\geq&  \left(1-(1+3\sqrt{\varepsilon}p'-1/p'-\sqrt{\varepsilon})-\sqrt{\varepsilon}\right)(1-\varepsilon_1)^2nx^2_{v^*}-4(t-1)\sqrt{\varepsilon}nx^2_{v^*}\\
    &=&\left(\left(\frac{1}{p'}-3p'\sqrt{\varepsilon}\right)(1-\varepsilon_1)^2-4(t-1)\sqrt{\varepsilon}\right)nx_{v^*}^2\ge (\frac{1}{p'}-\alpha)nx^2_{v^*}.
\end{array}
\end{equation}
Besides, for any $w\in W'$, by Lemmas \ref{clm-3-12} and \ref{lem-3-9}, we have
\begin{equation}\label{eq-6}
\begin{array}{lll}
  &&\sum_{u\in V_{j_w}\setminus(N_G(w)\cap V_{j_w})}x_wx_u-\sum_{v\in N_G(w)\cap V_{i_w}}x_wx_v\\[2mm]
  &=&x_w\left(\sum_{u\in V_{j_w}\setminus(N_G(w)\cap V_{j_w})}x_u-\sum_{v\in N_G(w)\cap V_{i_w}}x_v\right)\\[2mm]
  &\ge&x_w\left(\sum_{u\in V_{j_w}\setminus((N_G(w)\cap V_{j_w})\cup W)}x_u-\sum_{v\in N_G(w)\cap V_{i_w}}x_v\right)\\[2mm]
  &\ge&x_w\left(\sum_{u\in V_{j_w}\setminus((N_G(w)\cap V_{j_w})\cup W)}x_u-\sum_{v\in (N_G(w)\cap V_{i_w}\cap W)}x_v-\sum_{v\in (N_G(w)\cap V_{i_w})\setminus W}x_v\right)\\[2mm]
  &\ge&x_w\left(\left(\left(\frac{1}{p'}-3\sqrt{\varepsilon}\right)-\left(\frac{1}{2p'}+\alpha\right)-\sqrt{\varepsilon}\right)(1-\varepsilon_1)nx_{v^*}-|W|-\left(\frac{1}{2p'}+\alpha\right)nx_{v^*}\right)\\[2mm]
  &\ge&\left(\left(\frac{1}{2p'}-4\sqrt{\varepsilon}-\alpha\right)(1-\varepsilon_1)-3\sqrt{\varepsilon}-\left(\frac{1}{2p'}+\alpha\right)\right)nx_wx_{v^*}\\[2mm]
  &=&\left(-2\alpha-7\sqrt{\varepsilon}-\left(\frac{1}{2p'}-4\sqrt{\varepsilon}-\alpha\right)\varepsilon_1\right)nx_wx_{v^*}\\[2mm]
  &\ge& -3\alpha nx_wx_{v^*}\ge-3\alpha nx_{v^*},
  \end{array}
\end{equation}
where the third last inequality follows from $3x_{v^*}\ge 1$ and $|W|\le \sqrt{\varepsilon}n$.
Consequently, combining Eqs. \eqref{ineq2}, \eqref{eq-5}, \eqref{eq-6}, and Lemma \ref{lem-3-11}, we get
\[\begin{array}{lll}
     \lambda(G')-\lambda(G)&\geq& \frac{2}{xx^T}\left(\sum_{uv\in E(G')}x_ux_v-\sum_{uv\in E(G)}x_ux_v\right)\\[2mm]
    &\geq& \frac{2}{xx^T}\left(\left(\sum_{u\in V\setminus N_G(v')}x_ux_{v'}-\sum_{uv\in \bigcup_{i=1}^{p'} E(G[V_i \setminus W_i])}x_ux_v\right)\right.\\[2mm]
    &&\left.+\sum_{w\in W'}\left(\sum_{u\in V_{j_w}\setminus(N_G(w)\cap V_{j_w})}x_wx_u-\sum_{v\in N_G(w)\cap V_{i_w}}x_wx_v\right)\right)\\[2mm]
    &\ge&\frac{2}{xx^T}\left((\frac{1}{p'}-\alpha)nx^2_{v^*}+|W'|(-3\alpha nx_{v^*})\right)\\[2mm]
    &\ge&\frac{2}{xx^T}\left((\frac{1}{p'}-\alpha)nx^2_{v^*}-3\alpha p'(p'-1)t nx_{v^*}\right)\\[2mm]
    &\ge&\frac{2}{xx^T}\left((\frac{1}{p'}-\alpha)(1-\frac{5}{4p'})-3\alpha p'(p'-1)t \right)nx_{v^*}\\[2mm]
    &>&0,
 \end{array}\]
 which contradicts the maximality of $\lambda(G)$. Then, we get $|W|=t-1$. Furthermore, we conclude that $|W^*| = t-1$ since otherwise the graph $G'$ is still $C_k^p$-free satisfying $\lambda(G') > \lambda(G)$. 
 
 The proof is completed.
\end{proof}

\begin{lemma}\label{lem-a-1}
 $e(G[\bigcup_{i=1}^{p'}V_i\setminus W])=0$.
\end{lemma}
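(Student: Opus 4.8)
The plan is to rule out a single edge inside any part. Concretely, I would argue by contradiction: if some $V_i\setminus W$ contains an edge, then, using the $t-1$ vertices of $W^\ast=W$ guaranteed by Lemma~\ref{lem-3-13}, one assembles a matching of size $t$ with every edge inside a single part, and then completes it to a copy of $C_k^p$, contradicting the $C_k^p$-freeness of $G$. So suppose $e\bigl(G[\bigcup_{i=1}^{p'}V_i\setminus W]\bigr)\ge 1$, say $u_1u_2\in E(G)$ with $u_1,u_2\in V_i\setminus W$. By Lemma~\ref{lem-3-13} we have $W=W^\ast$ and $|W|=t-1$; write $W^\ast=\{w_1,\dots,w_{t-1}\}$ with $w_j\in V_{\ell_j}$. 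Since each $w_j\in W_{\ell_j}$ satisfies $d_{V_{\ell_j}}(w_j)\ge 2\sqrt{\varepsilon}n$ whereas only $O(t)$ vertices need to be avoided, I greedily select for each $j$ a neighbour $w_j'\in V_{\ell_j}\setminus W$ so that $u_1,u_2,w_1,w_1',\dots,w_{t-1},w_{t-1}'$ are pairwise distinct. This yields a matching $M=\{u_1u_2\}\cup\{w_jw_j':1\le j\le t-1\}$ of size $t$ in which every edge lies inside one part.

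Next I would run the embedding scheme from the proofs of Lemmas~\ref{lem-3-10} and~\ref{lem-3-13}. After relabelling parts so that the smallest part-index occurring among the $t$ edges equals $1$, I sort the edges by part-index $i_1\le\cdots\le i_t$ and place the two endpoints of the $j$-th edge at positions $(j-1)(p'+1)+i_j$ and $(j-1)(p'+1)+i_j+1$, both assigned to $V_{\overline{\beta}}=V_{i_j}$. The remaining positions $u_\beta$ are then filled one at a time, in increasing order of index, by choosing $u_\beta\in\bigl(\bigcap_{v\in K_\beta}N_{V_{\overline{\beta}}}(v)\bigr)\setminus W$. By the definition of $\overline{\beta}$, no already-placed vertex $v\in K_\beta$ lies in $V_{\overline{\beta}}$; for the non-$W$ vertices (including $u_1,u_2,w_j'$ and the previously filled ones) this gives $d_{V_{\overline{\beta}}}(v)\ge(\tfrac1{p'}-6\sqrt{\varepsilon}p')n$ by~\eqref{ineq3}, while for a vertex $w_j\in W^\ast\cap K_\beta$ we have $\overline{\beta}\ne\ell_j$ and hence $d_{V_{\overline{\beta}}}(w_j)\ge(\tfrac1{2p'}+\alpha)n$ by the definition of $W^\ast$.

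It remains to check the candidate set is nonempty at each step. Since the $W^\ast$-vertices occupy positions spaced at least $p'+1$ apart and $|K_\beta|\le 2p<2(p'+1)$, at most two elements of $K_\beta$ lie in $W^\ast$, so
\[
\left|\bigcap_{v\in K_\beta}N_{V_{\overline{\beta}}}(v)\right|\ge 2\Bigl(\tfrac1{2p'}+\alpha\Bigr)n+(|K_\beta|-2)\Bigl(\tfrac1{p'}-6\sqrt{\varepsilon}p'\Bigr)n-(|K_\beta|-1)\Bigl(\tfrac1{p'}+3\sqrt{\varepsilon}\Bigr)n>\alpha n,
\]
where the $\tfrac1{p'}$-terms cancel and the surviving $2\alpha n$ dominates the $O(\sqrt{\varepsilon}n)$ error because $|K_\beta|$ is bounded and $\alpha\gg\sqrt{\varepsilon}$. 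Hence each $u_\beta$ can be chosen, producing $C_k^p\subseteq G$, a contradiction. When $t=1$ this construction is vacuous and the conclusion already follows directly from $\sum_{i=1}^{p'}\nu(G[V_i\setminus W])\le t-1=0$ in Lemma~\ref{lem-3-10}.

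The main obstacle is making the embedding step fully rigorous: one must verify at each of the $k$ positions that the common-neighbourhood intersection stays positive despite the $W^\ast$-vertices contributing only the weaker degree $\tfrac1{2p'}n$ instead of $\tfrac1{p'}n$. This is precisely the cancellation displayed above, which is the same mechanism that powers Lemmas~\ref{lem-3-10}–\ref{lem-3-13}, so the argument reduces to checking that the bookkeeping of how many $W^\ast$-vertices can fall into a single window $K_\beta$ is identical here.
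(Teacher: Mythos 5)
Your proposal is correct and follows essentially the same route as the paper: extend the bad edge to a matching of size $t$ using the $t-1$ vertices of $W=W^*$ paired with in-part neighbours, then run the embedding scheme of Lemma~\ref{lem-3-10} with the key intersection bound $2(\tfrac{1}{2p'}+\alpha)n+(|K_\beta|-2)(\tfrac{1}{p'}-6\sqrt{\varepsilon}p')n-(|K_\beta|-1)(\tfrac{1}{p'}+3\sqrt{\varepsilon})n>\alpha n$, which is exactly the paper's computation. Your explicit greedy choice of the partners $w_j'$ and the separate remark on $t=1$ are harmless refinements of the same argument.
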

\begin{proof}
Conversely, assume that there exists an edge in $\bigcup_{i=1}^{p'}G[V_i\setminus W]$, say $v^{(1)}_{1,1}v^{(1)}_{1,2}\in E(G[V_1\setminus W])$. Therefore, there exist a matching $M=\{v^{(1)}_{1,1}v^{(1)}_{1,2},v^{(2)}_{i_2,1}v^{(2)}_{i_2,2},\ldots, v^{(t)}_{i_t,1}v^{(t)}_{i_t,2}\}$ satisfying $v^{(j)}_{i_j,1}\in W_{i_j}$ and $v^{(j)}_{i_j,2}\in V_{i_j}\setminus W_{i_j}$ for $2\le j\le t$. By the same method as that in the proof of Lemma \ref{lem-3-10}, we could also define the vertices $\{u_1,u_2,\ldots,u_k\}$ that form a $C_k^p$, which leads to the contradiction. In detail, adopting the setting and notation from the proof of Lemma \ref{lem-3-10}, we claim that, if $u_{\beta}$ is defined, then $u_{\beta}\in V_{\overline{\beta}}$. If $u_{\beta}$ is the vertex not defined with the minimum index, from the definition, we get $v\not\in V_{\overline{\beta}}$ for any $v\in K_{\beta}$, and $|K_{\beta}\cap W|\le 2$. Therefore, from Eq.\eqref{ineq3} and $W=W*$, we get
\[\begin{array}{lll}
\left|\bigcap_{v\in K_{\beta}}N_{V_{\overline{\beta}}}(v)\right|&\ge& 2\left(\frac{1}{2p'}+\alpha\right)n+(|K_{\beta}|-2)\left(\frac{1}{p'}-6\sqrt{\varepsilon}p'\right)n-(|K_{\beta}|-1)\left(\frac{1}{p'}+3\sqrt{\varepsilon}\right)n\\[2mm]
&=&\left(2\alpha-3(2(|K_{\beta}|-2)p'+|K_{\beta}|-1)\sqrt{\varepsilon}\right)n>\alpha n.\end{array}\]
Therefore, we could always choose $u_{\beta}$ to be an arbitrary vertex in $\left(\bigcap_{v\in K_{\beta}}N_{V_{\overline{\beta}}}(v)\right)\setminus W$. By repeating these processes, we get the desired vertices $\{u_1,\ldots,u_k\}$.  
\end{proof}

\begin{lemma}\label{lem-3-14}
    For any $u\in W$, $d(u)=n-1$.
\end{lemma}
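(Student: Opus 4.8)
The plan is to prove the lemma by a single edge-addition combined with a chromatic obstruction. Suppose, for contradiction, that some $u\in W$ has $d(u)\le n-2$, and fix a vertex $z\ne u$ with $uz\notin E(G)$. Put $G'=G+uz$. I would show that $G'$ is still $C_k^p$-free while $\lambda(G')>\lambda(G)$, contradicting $G\in\SPEX(n,C_k^p)$.

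For the structural containment, write $n_i=|V_i\setminus W|$ for $1\le i\le p'$. By Lemma~\ref{lem-3-13} we have $|W|=t-1$, and by Lemma~\ref{lem-a-1} no edge of $G$ lies inside a single part $V_i\setminus W$. Hence every edge of $G$ is incident with $W$ or joins two different parts $V_i\setminus W$ and $V_j\setminus W$, so that
\[
G\ \subseteq\ K_{t-1}\vee K_{n_1,\ldots,n_{p'}},
\]
with $W$ serving as the apex $K_{t-1}$ and the parts being the sets $V_i\setminus W$. Since the new edge $uz$ is incident with $u\in W$, it already belongs to this multipartite graph, so $G'\subseteq K_{t-1}\vee K_{n_1,\ldots,n_{p'}}$ as well.

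The crux is that $K_{t-1}\vee K_{n_1,\ldots,n_{p'}}$ is $C_k^p$-free, and this is exactly where Lemma~\ref{lem-2} enters. An embedding of $C_k^p$ into $K_{t-1}\vee K_{n_1,\ldots,n_{p'}}$ is the same as a partition of $V(C_k^p)$ into a set $S$ of at most $t-1$ vertices (mapped to the universal apex clique) together with $p'$ independent sets (mapped to the parts, which span no internal edge). Such a partition exists if and only if $\chi\!\left(C_k^p-S\right)\le p'$ for some $S$ with $|S|\le t-1$. But Lemma~\ref{lem-2} asserts that $C_k^p$ is color-$t$-critical with $\chi(C_k^p)=p+m+2=p'+1$, so deleting at most $t-1$ vertices leaves the chromatic number equal to $p'+1>p'$. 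Thus no such partition exists, $K_{t-1}\vee K_{n_1,\ldots,n_{p'}}$ contains no $C_k^p$, and therefore $G'$ is $C_k^p$-free.

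For the spectral gain I would first note that $G$ is connected: since $C_k^p$ is $2$-edge-connected, no copy of it can use a bridge, so joining two components of a $C_k^p$-free graph keeps it $C_k^p$-free and strictly raises $\lambda$, forcing the extremal $G$ to be connected. Hence the Perron vector $\bm{x}$ is strictly positive, and the Rayleigh quotient gives
\[
\lambda(G')-\lambda(G)\ \ge\ \frac{\bm{x}^{T}\!\left(A(G')-A(G)\right)\bm{x}}{\bm{x}^{T}\bm{x}}=\frac{2x_ux_z}{\bm{x}^{T}\bm{x}}>0,
\]
the desired contradiction; consequently $d(u)=n-1$ for every $u\in W$. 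The only non-routine ingredient is the crux step: recognising that embeddability of $C_k^p$ into $K_{t-1}\vee K_{n_1,\ldots,n_{p'}}$ is equivalent to $p'$-colourability after deleting at most $t-1$ vertices, and reading off its failure directly from the color-$t$-criticality of Lemma~\ref{lem-2}. Everything else — the containment supplied by Lemmas~\ref{lem-3-13} and~\ref{lem-a-1}, the connectivity of $G$, and the one-line Rayleigh estimate — is routine.
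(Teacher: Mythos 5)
Your proposal is correct and follows essentially the same route as the paper: add the missing edge at $u\in W$, observe that the resulting graph is still contained in $K_{t-1}\vee K_{n_1,\ldots,n_{p'}}$ (hence $C_k^p$-free, by the color-$t$-criticality from Lemma~\ref{lem-2}), and derive $\lambda(G')>\lambda(G)$ for a contradiction. The paper states these steps tersely; you have merely filled in the embeddability-versus-colourability justification and the connectivity/Perron positivity details, which are consistent with what the paper relies on.
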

\begin{proof}
    Suppose to the contrary that there exists a vertex $u \in W$ such that $d(u) < n - 1$. Let $v \in V(G)$ be a vertex with $uv \notin E(G)$, and define $G' = G + \{uv\}$. Since $G' \subseteq K_{t-1} \vee H$, where $H$ is a complete $p'$-partite graph on $n - t + 1$ vertices. Hence, $G'$ is $C_k^p$-free. However, it is clear that $\lambda(G') > \lambda(G)$, which contradicts the maximality of $\lambda(G)$.
    \end{proof}

\begin{proof}[{\bf Proof of the Theorem \ref{thm-2}.}] Now we prove that $G\cong K_{t-1}\vee T_{n-t+1,p'}$. For any $1\le i\le p'$, let $|V_i\setminus W|=n_i$. By Lemma \ref{lem-3-13} and Lemma \ref{lem-3-14}, there exists a $p'$-partite graph $H$ with classes of size $n_1,n_2,\ldots,n_{p'}$ such that $G\cong K_{t-1}\vee H$.  The maximality of $\lambda(G)$ yields $H\cong K_{n_1,n_2,\ldots,n_{p'}}$. It suffices to show that $|n_i-n_j|\leq 1$ for any $1\leq i<j\leq p'$. Assume that $n_1\geq n_2\geq \cdots\geq n_r$. Suppose to the contrary that $n_1-n_{p'}\geq 2$. Let $H'=K_{n_1-1,n_2,\ldots,n_{p'-1},n_{p'}+1}$, and $G'=K_{t-1}\vee H'$.

Recall that $x$ is the eigenvector of $G$ corresponding to $\lambda(G)$, by the symmetry we may assume $x=(x_1,\ldots,x_1,x_2,\ldots,x_2,\ldots,x_{p'},\ldots,x_{p'},x_{p'+1},\ldots,x_{p'+1})$, where the number of $x_i$ is $n_i$ for $1\leq i\leq p'$ and the number of $x_{p'+1}$ is $t-1$. Therefore, we get
$$\lambda(G)x_i=\sum_{j=1}^{p'}n_jx_j-n_ix_i+(t-1)x_{p'+1}, \text{ for $1\le i\le p'$}, $$ 
and 
$$\lambda(G)x_{p'+1}=\sum_{j=1}^{p'}n_jx_j+(t-2)x_{p'+1}.$$
It follows that $x_i=\frac{\lambda(G)+1}{\lambda(G)+n_i}x_{p'+1}$ for any $1\le i\le p'$, which implies that $x_{p'+1}=\max\{x_v\mid v\in V(G)\}=1$, and thus $x_i=\frac{\lambda(G)+1}{\lambda(G)+n_i}$ for $1\le i\le p'$. Let $u_0\in V_1\setminus W$ be a fixed vertex. Then $G'$ can be obtained from $G$ by deleting all edges between $u_0$ and $V_{p'}\setminus W$, and adding all edges between $u_0$ and $V_{1}\setminus (W\cup \{u_{0}\})$. Then, we have 
\begin{align*}
    \lambda(G')-\lambda(G)&\geq \frac{{x}^T (A(G')-A(G)){x}}{{x^T}{x}}\\
    &=\frac{2}{x^Tx}\left((n_1-1)x^2_{1}-n_{p'}x_{1}x_{p'}\right)\\
    &=\frac{2x_{1}}{x^Tx}\left((n_{1}-1)\frac{\lambda(G)+1}{\lambda(G)+n_{1}}-n_{p'}\frac{\lambda(G)+1}{\lambda(G)+n_{p'}}\right)\\
    &=\frac{2x_{1}}{x^Tx}\frac{(\lambda(G)+1)(n_{1}\lambda(G)-n_{p'}\lambda(G)-\lambda(G)-n_{p'})}{(\lambda(G)+n_{1})(\lambda(G)+n_{p'})}\\
    &\geq \frac{2x_{1}}{x^Tx}\frac{(\lambda(G)+1)(\lambda(G)-n_{p'})}{(\lambda(G)+n_{1})(\lambda(G)+n_{p'})}>0,
\end{align*}
where the last inequality holds since $\lambda(G)\geq (1-\frac{1}{p'})n$, and $n_{p'}\leq \frac{n}{p'}+3\sqrt{\varepsilon}n-(t-1)$. This contradicts the maximality of $\lambda(G)$.
\end{proof}

\section*{Acknowledgments}
This work is supported by NSFC (No. 12371362).

\section*{Declaration of competing interest}
We declare that we have no conflict of interest to this work.

{\small}
\end{document}